\newtheorem{prop}[subsection]{Proposition}
\newtheorem{teor}[subsection]{Theorem}
\newtheorem{cor} [subsection]{Corollary}
\DeclarePairedDelimiter\floor{\lfloor}{\rfloor}
\newcommand{\pa}{p_{\mathbf a}}
\newcommand{\fa}{f_{\mathbf a}}
\newcommand{\Fa}{F_{\mathbf a}}
\newcommand{\da}{p_{\mathbf a}}
\newcommand{\Pa}{P_{\mathbf a}}
\newcommand{\stir}{\genfrac{[}{]}{0pt}{}}
\newcommand{\stirr}{\genfrac{\{}{\}}{0pt}{}}
\def\lcm{\operatorname{lcm}}
\def\gcd{\operatorname{gcd}}
\numberwithin{equation}{section}
\begin{document}
\selectlanguage{english}
\frenchspacing

\large
\begin{center}
\textbf{On the restricted partition function II}

Mircea Cimpoea\c s and Florin Nicolae
\end{center}
\normalsize

\begin{abstract}
Let $\mathbf a=(a_1,\ldots,a_r)$ be a vector of positive integers. In continuation of a previous paper 
we present other formulas for the restricted partition function $\pa(n): = $ 
the number of integer solutions $(x_1,\dots,x_r)$ to
$\sum_{j=1}^r a_jx_j=n$ with $x_1\geq 0, \ldots, x_r\geq 0$.

\noindent \textbf{Keywords:} restricted partition function, Frobenius number, quasi-polynomial.

\noindent \textbf{2010 Mathematics Subject
Classification:} Primary 05A17, 11P83 ; Secondary 05A15, 11P82
\end{abstract}

\section*{Introduction}

\footnotetext[1]{The first author was supported 
by a grant of the Romanian National Authority for Scientific
Research, CNCS - UEFISCDI, project number PN-II-ID-PCE-2011-1023.}

\indent Let $\mathbf a:=(a_1,a_2,\ldots,a_r)$ be a sequence of positive integers, $r\geq 1$. 
The \emph{restricted partition function} associated to $\mathbf a$ is $\pa:\mathbb N \rightarrow \mathbb N$, 
$\pa(n):=$ the number of integer solutions $(x_1,\ldots,x_r)$ of $\sum_{i=1}^r a_ix_i=n$ with $x_i\geq 0$. 
Let $D$ be a common multiple of $a_1,\ldots,a_r$. 

Sylvester \cite{sylvester},\cite{sylv} decomposed the restricted partition in a sum of ``waves'': 
$$\pa(n)=\sum_{j\geq 1} W_{j}(n,\mathbf a),$$
where the sum is taken over all distinct divisors $j$ of the components of $\mathbf a$ and showed that for each such $j$, 
$W_j(n,\mathbf a)$ is the coefficient of $t^{-1}$ in
$$ \sum_{0 \leq \nu <j,\; \gcd(\nu,j)=1 } \frac{\rho_j^{-\nu n} e^{nt}}{(1-\rho_j^{\nu a_1}e^{-a_1t})\cdots (1-\rho_j^{\nu a_r}e^{-a_rt}) },$$
where $\rho_j=e^{\frac{2\pi i}{j}}$ and $\gcd(0,0)=1$ by convention.

Note that $W_{j}(n,\mathbf a)$'s are quasi-polynomials of period $j$.
( A \emph{quasi-polynomial} of period $j$ is a numerical function $f(n)$ such that there exists $j$ polynomials $P_1(n),P_2(n),\ldots,P_j(n)$ such that $f(n)=P_i(n)$ if $n\equiv i (\bmod j)$.) The first wave $\Pa(n):=W_1(\mathbf a,n)$ is called the \emph{polynomial part} of $\pa(n)$. 

Glaisher \cite{glaisher} made computations of the Sylvester waves in particular cases.
Fel and Rubinstein \cite{rubfel} proved formulas for the Sylvester waves using Bernoulli and Euler polynomials of higher order.
Rubinstein \cite{rub} showed that all Sylvester waves can be expressed in terms of Bernoulli 
polynomials only. Bayad and Beck \cite[Theorem 3.1]{babeck} proved an explicit expression 
of the partition function $\pa(n)$ in terms of Bernoulli-Barnes polynomials and the Fourier Dedekind sums, in the case that $a_1,\ldots,a_r$
are pairwise coprime. % Other formulas for $\pa(n)$ which involve Bernoulli polynomials are proved also in \cite{rubfel}.
Beck, Gessler and Komatsu \cite[page 2]{beck}, Dilcher and Vignat \cite[Theorem 1.1]{dilcher} proved explicit formulas for the polynomial part of $\pa(n)$. % Sills and Zeilberger \cite{sils} described an algorithm that provides explicit expressions for the number of partitions of $n$ into at most $m$ parts.

As a continuation of \cite{lucrare} we present here other formulas for $\pa(n)$ and for the Sylvester waves. 
In Proposition $2.2$ we prove that
$$ \pa(n) = \sum_{j=0}^{\floor*{\frac{n}{D}}} \binom{r+j-1}{j} \fa(n-jD), $$
where $ \fa(n)=\#\{(j_1,\ldots,j_r):\; a_1j_1+\cdots+a_rj_r=n,\; 0\leq j_k\leq \frac{D}{a_k}-1,\;1\leq k\leq r\}$. This result is
similar to Theorem $1$ of Rodseth and Seller \cite{rodseth}.

In Corollary $2.4$ (compare \cite[Theorem 2]{rodseth}) we prove the congruence
$$  (r-1)!\pa(n) \equiv 0 \; \bmod \; (j+k+1)(j+k+2)\cdot \ldots \cdot (j+r-1), $$
where $k= \floor*{\frac{n}{D}} - \left\lceil \frac{n+a_1+\cdots+a_r}{D} \right\rceil + r$.

In Proposition $4.2$ we prove that
$$ W_{j}(n,\mathbf a) = \frac{1}{D(r-1)!} \sum_{m=1}^r \sum_{\ell=1}^{j} \rho_j^{\ell} \sum_{k=m-1}^{r-1} 
\stir{r}{k} (-1)^{k-m+1} \binom{k}{m-1} \cdot$$ $$\cdot \sum_{\substack{0\leq j_1\leq \frac{D}{a_1}-1,\ldots, 0\leq j_r\leq \frac{D}{a_r}-1 \\ a_1j_1+\cdots+a_rj_r \equiv \ell (\bmod j)}} D^{-k} (a_1j_1+\cdots+a_rj_r)^{k-m+1} n^{m-1},$$
where $\stir{r}{k}$ are unsigned Stirling numbers of the first kind.

The \emph{Bernoulli numbers} are defined by the identity $$\frac{t}{e^t-1}=\sum_{\ell=0}^{\infty}\frac{t^{\ell}}{\ell !}B_{\ell}.$$
If $\gcd(a_i,a_j)=1$ for all $i\neq j$ we prove in Proposition $4.3$ that
$$\pa(n) = \sum_{m=1}^r \frac{(-1)^{r-m}}{(a_1\cdots a_r)(m-1)!}\sum_{i_{1}+\cdots+i_r=r-m}  \frac{B_{i_1}\cdots B_{i_r}}{i_1!\cdots i_r!} a_1^{i_1} \cdots a_r^{i_r} n^{m-1} +  \frac{1}{D(r-1)!} \cdot$$
$$ \cdot \sum_{j \neq 1} \sum_{\ell=1}^{j} \rho_j^{\ell}  \sum_{k=0}^{r-1} \frac{1}{D^k}\stir{r}{k} (-1)^{k} 
\sum_{\substack{0\leq j_1\leq \frac{D}{a_1}-1,\ldots, 0\leq j_r\leq \frac{D}{a_r}-1 \\ a_1j_1+\cdots+a_rj_r \equiv \ell (\bmod j)}} D^{-k} (a_1j_1+\cdots+a_rj_r)^{k}, $$
where $j|a_i$ for some $1\leq i\leq r$. Another formulas for $\pa(n)$ in the case that $a_1,\ldots,a_r$ are pairwise coprimes were proved in \cite[Theorem C, pag 113]{comtet}, \cite[Theorem 3.1]{babeck} and \cite{komatsu}.

Let $$\sum_{n=0}^{\infty}\pa(n)z^n = \frac{1}{(1-z^{a_1}) \ldots (1-z^{a_r})} =
\sum_{\lambda^D=1}\sum_{\ell=1}^{m(\lambda)}\frac{c_{\lambda,\ell}}{(\lambda-z)^{\ell}},$$
where $m(\lambda)$ is the multiplicity of $\lambda$ as a root of $(1-z^{a_1}) \ldots (1-z^{a_r})$.

In Proposition $4.4$ we prove that 
$$ c_{\rho_j,m}=\frac{\rho_j^m (m-1)!}{D} \sum_{t=m}^{m(\rho_j)} (-1)^{t-m}\stirr{t}{m} \sum_{\ell=1}^{j} \rho_j^{\ell} \sum_{k=m-1}^{r-1} 
\stir{r}{k} (-1)^{k-m+1} \binom{k}{m-1} \cdot $$ $$ \cdot \sum_{\substack{0\leq j_1\leq \frac{D}{a_1}-1,\ldots, 0\leq j_r\leq \frac{D}{a_r}-1 \\ a_1j_1+\cdots+a_rj_r \equiv n (\bmod j)}} D^{-k} (a_1j_1+\cdots+a_rj_r)^{k-m+1},$$
where % $\rho_j=e^{\frac{2\pi i}{j}}$ and 
$\stirr{t}{m}$ are Stirling numbers of the second kind.

In Proposition $4.5$ we prove that
$$c_{1,m} = \frac{(m-1)!}{a_1\cdots a_r} 
\sum_{\ell=m}^r (-1)^{\ell-m}\frac{\stirr{\ell}{m}} {(\ell-1)!} \sum_{i_1+\cdots+i_r=r-\ell}
\frac{B_{i_1}\cdots B_{i_r}}{i_1!\cdots i_r!}a_1^{i_1} \cdots a_r^{i_r}.$$
% where $B_j$ are the Bernoulli numbers. 
In the case $\mathbf a=(1,2,\ldots,r)$ we reprove O'Sullivan's formulas \cite{cormac} for Rademacher's coefficients $c_{01m}$, see Corollary $4.6$.

Given a sequence of positive integers $\mathbf a = (a_1,\ldots,a_r)$ with $gcd(a_1,\ldots,a_r)=1$, the \emph{Frobenius number} of $\mathbf a$,
denoted by $F(\mathbf a)=F(a_1,\ldots,a_r)$ is the largest integer $n$ with the property that $\pa(n)=0$. If $\gcd(a_i,a_j)=1$ for all $i\neq j$, we prove in Proposition $5.2$ that $F(A_1,\ldots,A_r)=D(r-1)-A_1-\cdots-A_r$, where $A_1:=\frac{D}{a_1},\ldots,A_r:=\frac{D}{a_r}$. This is a particular case of \cite[Theorem 2.7]{racz} and appears also in \cite[Theorem 1(a)]{tripathi}.

\section{Preliminaries}

Let $\mathbf a:=(a_1,a_2,\ldots,a_r)$ be a sequence of positive integers, $r\geq 1$. The \emph{restricted partition function} 
associated to $\mathbf a$ is $\da:\mathbb N \rightarrow \mathbb N$, 
$\pa(n):=$ the number of integer solutions $(x_1,\ldots,x_r)$ of $\sum_{i=1}^r a_ix_i=n$ with $x_i\geq 0$. 

Let $D$ be a common multiple of $a_1,a_2,\ldots,a_r$. 
Bell \cite{bell} has proved that $\pa(n)$ is a quasi-polynomial of degree $r-1$, with the period $D$, i.e. 
$$\pa(n)=d_{\mathbf a,r-1}(n)n^{r-1}+\cdots+d_{\mathbf a,1}(n)n +d_{\mathbf a,0}(n),$$
where $d_{\mathbf a,m}(n+D)=d_{\mathbf a,m}(n)$ for $0\leq m\leq r-1$ and $n\geq 0$, and $d_{\mathbf a,r-1}(n)$ is not identically zero.

In the following, we recall several results from our previous paper \cite{lucrare}.

\begin{teor}(\cite[Theorem 2.8(1)]{lucrare})
For $0\leq m\leq r-1$ and $n\geq 0$ we have 
$$d_{\mathbf a, m}(n) =\frac{1}{(r-1)!}  \sum_{\substack{0\leq j_1\leq \frac{D}{a_1}-1,\ldots, 0\leq j_r\leq \frac{D}{a_r}-1 \\ a_1j_1+\cdots+a_rj_r \equiv n (\bmod D)}} 
\sum_{k=m}^{r-1} \stir{r}{k} (-1)^{k-m} \binom{k}{m} D^{-k} (a_1j_1 + \cdots + a_rj_r)^{k-m}.$$
% where $\stir{r}{k}$ are the unsigned Stirling numbers of the first kind.
\end{teor}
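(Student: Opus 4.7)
The plan is to derive the formula from Proposition 2.2 by expanding the resulting binomial coefficient in powers of $n$. Setting $S(\mathbf j):=a_1j_1+\cdots+a_rj_r$ and grouping the sum in Proposition 2.2 by the tuple $\mathbf j$ (with $k=(n-S(\mathbf j))/D$), one rewrites
$$ \pa(n)=\sum_{\substack{0\leq j_i\leq D/a_i-1 \\ S(\mathbf j)\leq n,\ S(\mathbf j)\equiv n\,(\bmod\,D)}}\binom{r-1+\tfrac{n-S(\mathbf j)}{D}}{\tfrac{n-S(\mathbf j)}{D}}. $$
For $n\geq rD$ the inequality $S(\mathbf j)\leq rD-(a_1+\cdots+a_r)<n$ is automatic, so for such $n$ the sum ranges over every $\mathbf j$ in the box with $S(\mathbf j)\equiv n\,(\bmod\,D)$.

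Next, for each such $\mathbf j$, set $k:=(n-S(\mathbf j))/D$ and write $\binom{r-1+k}{k}=\frac{(k+1)(k+2)\cdots(k+r-1)}{(r-1)!}$, a polynomial in $k$ of degree $r-1$. I would expand it using the rising-factorial identity $k(k+1)\cdots(k+r-1)=\sum_{j=0}^{r}\stir{r}{j}k^j$ together with $\stir{r}{0}=0$ (for $r\geq 1$), so as to extract the factor $k$. Substituting back $k=(n-S(\mathbf j))/D$ and expanding each $(n-S(\mathbf j))^{j-1}$ by the binomial theorem produces an explicit polynomial in $n$ whose coefficients involve $\stir{r}{j}$, $\binom{j-1}{m}$, and powers of $D$ and $S(\mathbf j)$. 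Interchanging the order of summation to bring $n^m$ to the front, and summing over admissible $\mathbf j$, yields $\pa(n)$ as a polynomial in $n$ on each residue class modulo $D$. Since the quasi-polynomial decomposition $\pa(n)=\sum_m d_{\mathbf a,m}(n)n^m$ is unique, the coefficient of $n^m$ in this expression equals $d_{\mathbf a,m}(n)$, and a suitable re-indexing of the Stirling-number sum brings it into the form stated in the theorem.

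The identity so derived is a polynomial identity in $n$ on each residue class modulo $D$ and is valid for all $n\geq rD$; two polynomials agreeing on infinitely many integers coincide, so it extends to all $n\geq 0$. I do not foresee any conceptual obstacle, as the entire argument reduces to a Stirling-number expansion together with the binomial theorem. The principal technical difficulty is the bookkeeping---tracking the signs from $(n-S(\mathbf j))^{j-1}$, the binomial factors $\binom{j-1}{m}$, and the index shift relating the outer summation index to the Stirling index. This step is routine but error-prone, and is the place where a miscomputation is most likely.
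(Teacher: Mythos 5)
The paper itself gives no proof of this theorem --- it is simply quoted from the earlier paper \cite[Theorem 2.8(1)]{lucrare} --- so your derivation from Proposition 2.2 is necessarily an independent route. As a strategy it is sound and non-circular: Proposition 2.2 rests only on the generating-function identity $(2.1)$ and the inversion of a triangular linear system, not on Theorem 1.1, and grouping its right-hand side by the tuple $\mathbf j$ does give $\pa(n)=\sum_{\mathbf j}\binom{r-1+k}{k}$ with $k=(n-S(\mathbf j))/D$ (this is exactly Corollary 1.2, which you are in effect reproving). Expanding via the rising factorial, applying the binomial theorem, and invoking uniqueness of the quasi-polynomial coefficients (together with periodicity, to pass from $n\geq rD$ to all $n$) is a legitimate way to finish.

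However, the bookkeeping you defer as ``routine but error-prone'' is exactly where the argument fails as a proof of the statement \emph{as printed}. Carrying it out: $(k+1)\cdots(k+r-1)=\sum_{j=1}^{r}\stir{r}{j}k^{\,j-1}$, so after substituting $k=(n-S(\mathbf j))/D$ and extracting the coefficient of $n^m$ one obtains
$$(r-1)!\,d_{\mathbf a,m}(n)=\sum_{\mathbf j}\ \sum_{k=m}^{r-1}\stir{r}{k+1}(-1)^{k-m}\binom{k}{m}D^{-k}\,S(\mathbf j)^{k-m},$$
with $\stir{r}{k+1}$ where the theorem has $\stir{r}{k}$; no re-indexing turns one into the other while preserving the factors $\binom{k}{m}D^{-k}S^{k-m}$. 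Indeed the printed statement is false: for $r=2$, $\mathbf a=(1,1)$, $D=1$ we have $\pa(n)=n+1$, so $d_{\mathbf a,0}\equiv 1$, whereas the stated right-hand side for $m=0$ (the only tuple being $(0,0)$, with $S=0$) equals $\stir{2}{0}\cdot 1-\stir{2}{1}\cdot 0=0$. The theorem as reproduced here thus carries an off-by-one misprint in the Stirling index; your computation actually establishes the corrected version, and the closing claim that ``a suitable re-indexing brings it into the form stated'' is the one step that cannot be carried out.
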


\begin{cor}(\cite[Corollary 2.10]{lucrare}) We have
$$ \pa(n) = \frac{1}{(r-1)!} \sum_{\substack{0\leq j_1\leq \frac{D}{a_1}-1,\ldots, 0\leq j_r\leq \frac{D}{a_r}-1 \\ 
a_1j_1+\cdots+a_rj_r \equiv n (\bmod D)}} \prod_{\ell=1}^{r-1} (\frac{n-a_1j_{1}- \cdots -a_rj_r}{D}+\ell ).$$
\end{cor}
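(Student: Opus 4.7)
The plan is to derive this as a direct consequence of Theorem 2.1 by reassembling the quasi-polynomial $\pa(n)=\sum_{m=0}^{r-1} d_{\mathbf a,m}(n)\,n^m$ from its coefficients.

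First I would substitute the formula of Theorem 2.1 into $\pa(n)=\sum_{m=0}^{r-1}d_{\mathbf a,m}(n)n^m$ and interchange the order of summation so that the outermost sum is the one over tuples $(j_1,\ldots,j_r)$ subject to $a_1j_1+\cdots+a_rj_r\equiv n\pmod D$. Writing $S:=a_1j_1+\cdots+a_rj_r$, the remaining sums take the form
\[
\sum_{m=0}^{r-1}\sum_{k=m}^{r-1}\stir{r}{k}(-1)^{k-m}\binom{k}{m}D^{-k}S^{k-m}n^{m}.
\]
Swapping the $m$- and $k$-sums (the pair $0\le m\le k\le r-1$ is symmetric) this becomes
\[
\sum_{k=0}^{r-1}\stir{r}{k}D^{-k}\sum_{m=0}^{k}\binom{k}{m}n^{m}(-S)^{k-m}.
\]

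The main computational step, which is routine, is that the inner sum collapses by the binomial theorem to $(n-S)^k$. Pulling the factor $D^{-k}$ inside we therefore obtain
\[
\pa(n)=\frac{1}{(r-1)!}\sum_{(j_1,\ldots,j_r)}\sum_{k=0}^{r-1}\stir{r}{k}\!\left(\frac{n-S}{D}\right)^{\!k},
\]
where the outer sum is over the same set of tuples as before.

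Finally I would invoke the standard generating identity for the unsigned Stirling numbers of the first kind, namely
\[
\sum_{k=0}^{r-1}\stir{r}{k}x^{k}=\prod_{\ell=1}^{r-1}(x+\ell),
\]
applied with $x=\frac{n-S}{D}$. This turns the inner polynomial into the product $\prod_{\ell=1}^{r-1}\bigl(\frac{n-a_1j_1-\cdots-a_rj_r}{D}+\ell\bigr)$, giving exactly the stated expression. The only delicate point is making sure the Stirling-number convention used in Theorem 2.1 matches the product $(x+1)(x+2)\cdots(x+r-1)$ on the nose; this is just a matter of bookkeeping and causes no genuine difficulty.
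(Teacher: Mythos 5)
Your derivation is correct, and since the paper only recalls this corollary from \cite[Corollary 2.10]{lucrare} without reproving it, there is no in-paper argument to compare against; your route (reassembling $\pa(n)=\sum_m d_{\mathbf a,m}(n)n^m$ from Theorem 1.1, collapsing the inner sum by the binomial theorem, then applying the Stirling generating identity) is essentially how the cited reference obtains it. The one point you flag as bookkeeping does resolve in your favour: the convention under which Theorem 1.1 is true is exactly $\sum_{k=0}^{r-1}\stir{r}{k}x^k=\prod_{\ell=1}^{r-1}(x+\ell)$, i.e.\ the authors' $\stir{r}{k}$ is the classical unsigned Stirling number $\stir{r}{k+1}$ (one checks this on $\mathbf a=(1,1)$, where Theorem 1.1 forces $\stir{2}{0}=\stir{2}{1}=1$, or from the inversion formulas (3.3)--(3.5), which require the shifted indexing $\stir{t}{\ell-1}$ to play the role of the classical $\stir{t}{\ell}$). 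So the product identity holds on the nose and no off-by-one correction is needed.
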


\begin{cor}(\cite[Corollary 2.12]{lucrare})
For $n\geq 0$ we have $\pa(n)=0$ if and only if $n<a_1j_1+\cdots+a_rj_r$ for all 
$0\leq j_1\leq \frac{D}{a_1}-1,\ldots, 0\leq j_r\leq \frac{D}{a_r}-1$ with
$a_1j_1+\cdots+a_rj_r \equiv n (\bmod D)$.
\end{cor}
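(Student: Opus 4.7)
The plan is to deduce Corollary 2.12 directly from the explicit formula in Corollary 2.10. Write $S=a_1j_1+\cdots+a_rj_r$ and $t=(n-S)/D$, which is an integer whenever the congruence condition $S\equiv n\pmod D$ holds. The formula then reads
$$\pa(n)=\frac{1}{(r-1)!}\sum_{\substack{0\leq j_k\leq D/a_k-1\\ S\equiv n(\bmod D)}}\prod_{\ell=1}^{r-1}(t+\ell),$$
so the strategy is to show that each summand is non-negative and to identify exactly when it vanishes.

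The key observation is a sharp range bound on $t$. From $0\leq j_k\leq D/a_k-1$ we get $0\leq S\leq rD-(a_1+\cdots+a_r)<rD$, which forces $t>(n-rD)/D\geq -r$, and since $t$ is an integer this means $t\geq -(r-1)$. The factors appearing in the product are $t+1,t+2,\ldots,t+r-1$; under the bound $t\geq -(r-1)$ none of these factors is negative without another factor being zero, so the product $\prod_{\ell=1}^{r-1}(t+\ell)$ is always non-negative. Moreover, it equals zero exactly when some $t+\ell=0$, i.e.\ exactly when $t\in\{-1,-2,\ldots,-(r-1)\}$, which, combined with the range $t\geq -(r-1)$, is equivalent to $t\leq -1$, i.e.\ $n<S$.

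Having established that every summand is non-negative and vanishes iff $n<S$, the corollary follows immediately: $\pa(n)=0$ iff every summand vanishes iff $n<a_1j_1+\cdots+a_rj_r$ for every tuple $(j_1,\ldots,j_r)$ satisfying the congruence, which is the stated condition. The only subtle point is the range bound $t\geq -(r-1)$, which requires the strict inequality $a_k\geq 1$ and the integrality of $t$; once this is in place the argument is a one-line sign analysis of a product of $r-1$ consecutive integers.
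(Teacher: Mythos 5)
Your argument is correct: the bound $0\le a_1j_1+\cdots+a_rj_r\le rD-\sigma<rD$ forces the integer $t=(n-S)/D$ to satisfy $t\ge-(r-1)$, so each product $\prod_{\ell=1}^{r-1}(t+\ell)$ in Corollary $1.2$ is nonnegative and vanishes exactly when $t\le-1$, i.e.\ $n<S$, and the conclusion follows since a sum of nonnegative terms vanishes iff each term does. The present paper only cites this statement from \cite[Corollary 2.12]{lucrare} without reproducing a proof, and your sign analysis of the product formula is precisely the intended derivation from Corollary $1.2$, so there is nothing to object to.
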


We also recall the following result of Beck, Gessler and Komatsu \cite[page 2]{beck}. See also \cite[Corollary 2.11]{lucrare}.

\begin{teor}
The polynomial part of $\pa(n)$ is
$$P_{\mathbf a}(n) := \frac{1}{a_1\cdots a_r}\sum_{u=0}^{r-1}\frac{(-1)^u}{(r-1-u)!}\sum_{i_1+\cdots+i_r=u} 
\frac{B_{i_1}\cdots B_{i_r}}{i_1!\cdots i_r!}a_1^{i_1}\cdots a_r^{i_r} n^{r-1-u}.$$
\end{teor}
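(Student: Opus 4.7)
The natural route is to derive the formula from Sylvester's wave formula applied to $j=1$, which already appears in the introduction. Taking $j=1$ (so $\rho_1=1$ and $\nu=0$), Sylvester's identity tells us that $\Pa(n)=W_1(n,\mathbf{a})$ is precisely the coefficient of $t^{-1}$ in
$$\frac{e^{nt}}{(1-e^{-a_1 t})\cdots(1-e^{-a_r t})},$$
so the whole proof reduces to extracting a residue.

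The first step is to isolate the pole at $t=0$ by writing $\dfrac{1}{1-e^{-a_j t}}=\dfrac{1}{a_j t}\cdot\dfrac{a_j t}{1-e^{-a_j t}}$, which yields
$$\frac{e^{nt}}{\prod_{j=1}^r(1-e^{-a_j t})}=\frac{1}{a_1\cdots a_r}\cdot\frac{1}{t^r}\cdot e^{nt}\prod_{j=1}^r\frac{a_j t}{1-e^{-a_j t}}.$$
The factor following $1/t^r$ is holomorphic at $0$, so $\Pa(n)=\frac{1}{a_1\cdots a_r}[t^{r-1}]\bigl(e^{nt}\prod_j\tfrac{a_j t}{1-e^{-a_j t}}\bigr)$. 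Next, I would specialize the Bernoulli generating function $\frac{u}{e^u-1}=\sum_{\ell\ge 0}\frac{B_\ell}{\ell!}u^\ell$ at $u=-a_j t$ to obtain
$$\frac{a_j t}{1-e^{-a_j t}}=\sum_{\ell\ge 0}\frac{(-1)^\ell B_\ell\,a_j^\ell}{\ell!}t^\ell,$$
and combine this with the Taylor series $e^{nt}=\sum_{m\ge 0}\frac{n^m}{m!}t^m$.

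The final step is a routine multinomial convolution: the coefficient of $t^{r-1}$ in the product equals
$$\sum_{m+i_1+\cdots+i_r=r-1}\frac{n^m}{m!}\prod_{j=1}^r\frac{(-1)^{i_j}B_{i_j}a_j^{i_j}}{i_j!}.$$
Grouping the inner sum by $u:=i_1+\cdots+i_r$ (so that $m=r-1-u$) and pulling out the common factor $(-1)^u$ gives precisely the claimed expression after dividing by $a_1\cdots a_r$. There is no real obstacle here; the only bookkeeping to watch is the sign $(-1)^u=(-1)^{i_1+\cdots+i_r}$ and the fact that, among odd-index Bernoulli numbers, only $B_1=-\tfrac12$ is nonzero, so the sign actually matters only through the linear Bernoulli contribution.
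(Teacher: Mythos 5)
Your computation is correct: the substitution $u=-a_jt$ in the Bernoulli generating function gives $\frac{a_jt}{1-e^{-a_jt}}=\sum_{\ell\ge 0}\frac{(-1)^\ell B_\ell a_j^\ell}{\ell!}t^\ell$, the pole at $t=0$ has order exactly $r$, and the convolution and regrouping by $u=i_1+\cdots+i_r$ reproduce the stated formula (a sanity check with $\mathbf a=(1,1)$ gives $n-2B_1=n+1$, as it should). Note, however, that the paper does not prove this statement at all: Theorem 1.4 is quoted from Beck--Gessel--Komatsu (with a pointer to Corollary 2.11 of the authors' earlier paper), so there is no internal argument to compare against. Your derivation is therefore a self-contained proof where the paper offers only a citation, and it is essentially the classical residue argument. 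The one point you are implicitly leaning on is the identification of Sylvester's first wave $W_1(n,\mathbf a)$ (defined as a residue) with the polynomial part $P_{\mathbf a,1}(n)$ in the sense of the quasi-polynomial decomposition $\pa(n)=\sum_{\lambda^D=1}P_\lambda(n)\lambda^{-n}$ of Section 3; the paper adopts this identification as a convention in the introduction and in equation $(4.1)$, so you are entitled to it here, but a fully self-contained write-up would either cite Sylvester's theorem explicitly for this or instead extract $P_{\mathbf a,1}$ directly from the partial fraction decomposition of $\frac{1}{(1-z^{a_1})\cdots(1-z^{a_r})}$ at $z=1$, which amounts to the same Bernoulli computation.
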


\section{A formula and a congruence for $\pa(n)$}

Let $\mathbf a:=(a_1,a_2,\ldots,a_r)$ be a sequence of positive integers, $r\geq 1$. It holds that
$$\sum_{n=0}^{\infty}\pa(n)z^n = \frac{1}{(1-z^{a_1})\cdots(1-z^{a_r})},\; |z|<1.$$
Let $D$ be a common multiple of $a_1,\ldots,a_r$. Let 
$$F_{\mathbf a}(z):= \frac{(1-z^D)^r}{(1-z^{a_1})\cdots(1-z^{a_r})}=\prod_{i=1}^r (1+z^{a_i}+\cdots+z^{a_i(\frac{D}{a_i}-1)}). $$
Let $d:=rD-a_1-\cdots-a_r$. Since $F_{\mathbf a}(z) = F_{\mathbf a}(\frac{1}{z})$, it follows that
$$F_{\mathbf a}(z)=:f_{\mathbf a}(d)z^d + \cdots +f_{\mathbf a}(1)z + f_{\mathbf a}(0)$$ is a reciprocal polynomial, 
that is $\fa(d-n)=\fa(n)$ for $0\leq n\leq d$. Let $\fa(n):=0$ for $n\geq d+1$. It holds that
$$ \fa(n)=\#\{(j_1,\ldots,j_r):\; a_1j_1+\cdots+a_rj_r=n,\; 0\leq j_k\leq \frac{D}{a_k}-1,\;1\leq k\leq r\}. $$

From the power series identity
$$\sum_{n=0}^{\infty}\fa(n)z^n = \Fa(z)= (1-z^D)^r\sum_{n=0}^{\infty}\pa(n)z^n = \sum_{n=0}^{\infty}\pa(n) \sum_{j=0}^r \binom{r}{j}(-1)^j z^{n+jD}$$
it follows that
\begin{equation} \fa(n) = \sum_{j=0}^{\floor*{\frac{n}{D}} } \binom{r}{j}(-1)^j \pa(n-jD),\; n\geq 0. \end{equation}

\begin{prop}
For $n\geq 0$ we have that 
$$\fa(n) = \frac{1}{(r-1)!} \sum_{j=0}^{\floor*{\frac{n}{D}} } \binom{r}{j}(-1)^j \sum_{\substack{0\leq j_1\leq 
\frac{D}{a_1}-1,\ldots, 0\leq j_r\leq \frac{D}{a_r}-1 \\ a_1j_1+\cdots+a_rj_r \equiv n (\bmod D)}} 
\prod_{\ell=1}^{r-1} (\frac{n-a_1j_{1}- \cdots -a_rj_r}{D}+\ell-j)$$
\end{prop}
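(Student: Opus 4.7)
The plan is to combine the power series identity (2.1), already derived just above the proposition, with Corollary 1.2 applied to each shifted argument $n-jD$. This is essentially a substitution argument, so the work is bookkeeping rather than genuine difficulty.

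First I would recall from (2.1) that
\[
\fa(n) = \sum_{j=0}^{\floor*{\frac{n}{D}}} \binom{r}{j}(-1)^j \pa(n-jD),
\]
so the task reduces to inserting a closed formula for $\pa(n-jD)$ into this sum.

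Next I would invoke Corollary 1.2 with $n$ replaced by $n-jD$. Because $D$ is a common multiple of $a_1,\ldots,a_r$, the congruence condition $a_1j_1+\cdots+a_rj_r \equiv n-jD \pmod D$ simplifies to $a_1j_1+\cdots+a_rj_r \equiv n \pmod D$, so the indexing set does not depend on $j$. Moreover, factoring $D$ out of the shift,
\[
\frac{(n-jD) - a_1j_1 - \cdots - a_rj_r}{D} + \ell \;=\; \frac{n - a_1j_1 - \cdots - a_rj_r}{D} + \ell - j,
\]
which is exactly the factor that appears in the product in the statement.

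Substituting these expressions into the sum from (2.1) and pulling the common constant $\frac{1}{(r-1)!}$ outside yields the claimed identity verbatim. The only subtle point, and the one I would flag explicitly, is the justification that the upper summation limit may be taken as $\floor*{n/D}$: for $j > \floor*{n/D}$ we have $n - jD < 0$, so $\pa(n-jD)=0$ and the corresponding terms drop out; conversely, for $j \le \floor*{n/D}$ Corollary 1.2 legitimately applies to the nonnegative integer $n-jD$. With this verification the proof is complete.
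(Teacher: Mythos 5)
Your proof is correct and follows exactly the paper's route: the paper's own proof is the one-line "It follows from Corollary 1.2 and (2.1)," and you have simply spelled out the substitution of $n-jD$ into Corollary 1.2, the invariance of the congruence class mod $D$, and the resulting shift by $j$ in the product. Nothing further is needed.
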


\begin{proof}
It follows from Corollary $1.2$ and $(2.1)$.
\end{proof}

\begin{prop}(compare \cite[Theorem 1]{rodseth})
It holds that
 $$ \pa(n) = \sum_{j=0}^{\floor*{\frac{n}{D}}} \binom{r+j-1}{j} \fa(n-jD),\;n\geq 0. $$
\end{prop}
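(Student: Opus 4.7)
The plan is to read the identity directly off the generating function used throughout Section~2. From
$$\sum_{n=0}^{\infty}\pa(n)z^n = \frac{1}{(1-z^{a_1})\cdots(1-z^{a_r})}$$
and the definition $\Fa(z)=\frac{(1-z^D)^r}{(1-z^{a_1})\cdots(1-z^{a_r})}$, one gets
$$\sum_{n=0}^{\infty}\pa(n)z^n = \frac{\Fa(z)}{(1-z^D)^r}, \qquad |z|<1.$$
So I would aim to expand the right-hand side as a Cauchy product of two explicit power series and compare coefficients of $z^n$.

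The first factor is the polynomial $\Fa(z) = \sum_{m=0}^{d}\fa(m)z^m$, already expressed in the excerpt as a sum whose coefficients count the tuples $(j_1,\dots,j_r)$ with $a_1j_1+\cdots+a_rj_r=m$ and $0\le j_k\le \frac{D}{a_k}-1$. For the second factor, I would apply the negative binomial series
$$\frac{1}{(1-z^D)^r} = \sum_{j=0}^{\infty} \binom{r+j-1}{j} z^{jD}.$$
Multiplying the two series and equating coefficients of $z^n$ yields
$$\pa(n) = \sum_{j=0}^{\infty} \binom{r+j-1}{j}\fa(n-jD),$$
with the convention $\fa(m)=0$ for $m<0$. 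Since $n-jD<0$ whenever $j>\floor*{n/D}$, the sum truncates at $j=\floor*{n/D}$, giving the asserted formula.

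There is no real obstacle here: the argument is a direct Cauchy-product identification, and both series converge absolutely in a common disk around the origin (in fact $\Fa(z)$ is a polynomial, so only finitely many terms contribute to each coefficient and no analytic subtleties arise). The only detail worth recording explicitly is the truncation at $\floor*{n/D}$, which uses that $\fa$ is supported on $[0,d]$ but only the vanishing for negative arguments matters for the bound on $j$.
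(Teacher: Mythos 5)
Your proof is correct, but it takes a genuinely different route from the paper's. The paper starts from its identity $(2.1)$, $\fa(n-tD)=\sum_{j=t}^{k}\binom{r}{j-t}(-1)^{j-t}\pa(n-jD)$ with $k=\floor*{\frac{n}{D}}$, views this as a unitriangular linear system in the unknowns $\pa(n-jD)$, and inverts it: writing $\pa(n)=\sum_{j=0}^k(-1)^j\Delta_j\fa(n-jD)$ with $\Delta_0=1$ and $\Delta_j=-\sum_{i=0}^{j-1}\binom{r}{j-i}\Delta_i$, it then proves $\Delta_j=(-1)^j\binom{r+j-1}{j}$ by induction on $j$. You instead bypass the inversion entirely by writing $\sum_n\pa(n)z^n=\Fa(z)\cdot(1-z^D)^{-r}$ and expanding the second factor by the negative binomial series, reading the claim off the Cauchy product; your handling of the truncation at $j=\floor*{\frac{n}{D}}$ (only the vanishing of the coefficients of $\Fa$ in negative degrees matters) is also correct. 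Your argument is shorter and replaces the induction with a standard series expansion; it is in effect the mirror image of how the paper derives $(2.1)$ itself, since multiplying by $(1-z^D)^r$ gives $(2.1)$ and dividing by it gives the proposition. What the paper's route buys is that it exhibits the two formulas explicitly as an inverse pair of finite triangular relations, and the recursion for $\Delta_j$ is exactly the Vandermonde-type identity $\sum_{i=0}^{j}(-1)^{j-i}\binom{r}{j-i}\binom{r+i-1}{i}=0$ for $j\geq 1$ that your series manipulation encodes implicitly in the identity $(1-w)^r\cdot(1-w)^{-r}=1$. Both proofs are sound.
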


\begin{proof}
Denote $k:=\floor*{\frac{n}{D}}$.
From $(2.1)$ we get the following system of linear equations in the indeterminates $\pa(n-jD)$, $0\leq j\leq k$
\begin{equation} \sum_{j=t}^{k} \binom{r}{j-t}(-1)^{j-t} \pa(n-jD) = \fa(n-tD), \; 0\leq t\leq k. \end{equation}
It follows that  $$ \pa(n) =\sum_{j=0}^k (-1)^j \Delta_{j}\fa(n-jD), $$
where $\Delta_0=1$ and $ \Delta_{j} =- \sum_{i=0}^{j-1} \binom{r}{j-i} \Delta_i$.
% The matrix of $(2.2)$ is upper triangular with $1$ on the principal diagonal, so
% {\small
% $$ \pa(n) =\begin{vmatrix} \fa(n) & -\binom{r}{1} & \binom{r}{2} & \cdots & (-1)^{k}\binom{r}{k} \\ 
%                              \fa(n-D) & 1 & -\binom{r}{1} & \cdots & (-1)^{k-1}\binom{r}{k-1} \\ 
%                              \fa(n-2D) & 0 & 1 & \cdots & \vdots \\
%                              \vdots & & \ddots & \ddots & \vdots \\
%                               \fa(n-kD) & 0 & \cdots & 0 & 1 
% \end{vmatrix} = \sum_{j=0}^k (-1)^j \Delta_{j}\fa(n-jD), $$}
% where $$ \Delta_{j} =  \begin{vmatrix} -\binom{r}{1} & \binom{r}{2} &  \cdots & & (-1)^j\binom{r}{j} \\
%                                   1 & -\binom{r}{1} & \cdots &  & (-1)^{j-1}\binom{r}{j-1} \\
%                                   0 & 1 & \cdots & & \vdots \\
%                                   \vdots & \ddots & \ddots & & \vdots\\
%                                   0 & \cdots & 0 & 1 & -\binom{r}{1} \end{vmatrix} = - \sum_{i=0}^{j-1} \binom{r}{j-i} \Delta_i, \; \Delta_0=1.$$
Using induction on $j\geq 0$ it follows that $\Delta_j=(-1)^{j}\binom{r+j-1}{j}$ for all $0\leq j\leq k$. 
Hence $$ \pa(n) = \sum_{j=0}^k \binom{r+j-1}{j} \fa(n-jD). $$
\end{proof}

\begin{cor}
For $n\geq 0$ it holds that
$$ (r-1)!\pa(n) = \sum_{j= \left\lceil \frac{n+\sigma}{D} \right\rceil - r}^{\floor*{\frac{n}{D}}} (j+1)\cdots (j+r-1) f_{\mathbf a}(n-jD), $$
where $\sigma=a_1+\cdots +a_r$.
\end{cor}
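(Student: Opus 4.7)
The plan is to derive the identity directly from Proposition $2.2$ and then simply extend or trim the range of summation using the support of $\fa$ and vanishing of the polynomial factor.

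The first step is to multiply Proposition $2.2$ by $(r-1)!$ and use the identity
$$(r-1)!\binom{r+j-1}{j}=(j+1)(j+2)\cdots(j+r-1),$$
which yields
$$(r-1)!\pa(n)=\sum_{j=0}^{\floor*{n/D}}(j+1)(j+2)\cdots(j+r-1)\,\fa(n-jD).$$
The task is then to replace the lower limit $0$ by $\left\lceil\frac{n+\sigma}{D}\right\rceil-r$. I would treat two cases depending on the sign of $\left\lceil\frac{n+\sigma}{D}\right\rceil-r$.

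In the case $\left\lceil\frac{n+\sigma}{D}\right\rceil-r>0$, the move shortens the sum, so I must show that each term with $0\leq j\leq \left\lceil\frac{n+\sigma}{D}\right\rceil-r-1$ vanishes. Recall that $\Fa(z)$ is a polynomial of degree $d=rD-\sigma$, so $\fa(m)=0$ whenever $m>d$. Using the elementary bound $\lceil x\rceil<x+1$, such $j$ satisfies $j<(n+\sigma)/D-r$, hence $n-jD>rD-\sigma=d$, giving $\fa(n-jD)=0$.

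In the remaining case $\left\lceil\frac{n+\sigma}{D}\right\rceil-r\leq 0$, the move lengthens the sum to include some negative indices. Since $n+\sigma>0$, one has $\left\lceil\frac{n+\sigma}{D}\right\rceil\geq 1$, so the added range satisfies $-(r-1)\leq j\leq -1$. For any such $j$, the factor $j+(-j)=0$ appears in the product $(j+1)(j+2)\cdots(j+r-1)$ (because $-j\in\{1,\ldots,r-1\}$), so the whole term vanishes. Combining the two cases, the sum over $0\leq j\leq\floor*{n/D}$ equals the sum over $\left\lceil\frac{n+\sigma}{D}\right\rceil-r\leq j\leq\floor*{n/D}$, which is the desired equality. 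There is no real obstacle here; the only thing to be careful about is the strict ceiling inequality $\lceil x\rceil<x+1$ used to cover the boundary case cleanly.
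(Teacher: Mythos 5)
Your proof is correct and follows essentially the same route as the paper: multiply Proposition $2.2$ by $(r-1)!$, use $(r-1)!\binom{r+j-1}{j}=(j+1)\cdots(j+r-1)$, and discard the terms with $n-jD>rD-\sigma$ because $\fa$ vanishes there. You are in fact slightly more careful than the paper, which does not explicitly treat the case where the new lower limit $\left\lceil \frac{n+\sigma}{D} \right\rceil - r$ is negative; your observation that the product $(j+1)\cdots(j+r-1)$ contains a zero factor for $-(r-1)\leq j\leq -1$ cleanly closes that small gap.
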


\begin{proof}
For $n-jD > rD-\sigma$ it holds that $f_{\mathbf a}(n-jD)= 0$. From Proposition $2.2$ it follows that
$$(r-1)!\pa(n) = \sum_{j=0}^{\floor*{\frac{n}{D}}} (r-1)! \binom{r+j-1}{j} \fa(n-jD) = $$
$$ = \sum_{j= \left\lceil \frac{n+\sigma}{D} \right\rceil - r}^{\floor*{\frac{n}{D}}} (j+1)\cdots (j+r-1) f_{\mathbf a}(n-jD). $$
\end{proof}

%As a consequence of Corollary $2.3$ we get the following result.

\begin{cor}(compare \cite[Theorem 2]{rodseth})
For $n\geq 0$ it holds that
$$  (r-1)!\pa(n) \equiv 0 \; \bmod \; (j+k+1)(j+k+2)\cdot \ldots \cdot (j+r-1), $$
where $k= \floor*{\frac{n}{D}} - \left\lceil \frac{n+\sigma}{D} \right\rceil + r$, $\sigma=a_1+\ldots +a_r$.
\end{cor}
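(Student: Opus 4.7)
The plan is to read off the congruence directly from Corollary~2.3. Introduce the shorthand $j := \left\lceil \frac{n+\sigma}{D} \right\rceil - r$ for the lower summation index; then the definition of $k$ gives $\floor*{\frac{n}{D}} = j+k$, so Corollary~2.3 rewrites as
$$ (r-1)!\pa(n) = \sum_{i=0}^{k} (j+i+1)(j+i+2)\cdots(j+i+r-1)\, f_{\mathbf a}(n-(j+i)D). $$
The task is thus reduced to exhibiting a common divisor of the $k+1$ coefficients appearing on the right-hand side.

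The key observation is purely combinatorial: each coefficient is a product of $r-1$ consecutive integers, namely those lying in the window $[j+i+1,\; j+i+r-1]$. As $i$ varies over $\{0,1,\ldots,k\}$ these windows all share the common subwindow $[j+k+1,\; j+r-1]$, which is nonempty exactly when $k\leq r-2$. (In the complementary range $k\geq r-1$ the claimed modulus is an empty product equal to $1$ and the congruence is vacuous, so this case can be dismissed at once.)

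In the nontrivial range $k\leq r-2$, for every integer $m\in [j+k+1,\; j+r-1]$ and every $0\leq i\leq k$ the inequalities
$$ j+i+1 \;\leq\; j+k+1 \;\leq\; m \;\leq\; j+r-1 \;\leq\; j+i+r-1 $$
show that $m$ appears as a factor of $(j+i+1)(j+i+2)\cdots(j+i+r-1)$. Consequently the product $(j+k+1)(j+k+2)\cdots (j+r-1)$ divides every summand and hence divides the total sum $(r-1)!\pa(n)$, which is the claim.

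I do not foresee any real obstacle here; the argument is just an arithmetic observation about overlapping blocks of consecutive integers, applied to the explicit formula of Corollary~2.3. The only points to double-check are the bookkeeping $\floor*{\frac{n}{D}} = j+k$ (immediate from the definition of $k$) and the edge case $k\geq r-1$ (vacuous, as noted).
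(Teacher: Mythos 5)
Your proof is correct and follows the same route as the paper: apply Corollary 2.3 and observe that the block of consecutive integers $(j+k+1)\cdots(j+r-1)$, with $j$ taken to be the lower summation limit $\lceil (n+\sigma)/D\rceil - r$, is a literal sub-product of every coefficient $(j+i+1)\cdots(j+i+r-1)$ occurring in the sum. Your write-up is in fact slightly cleaner than the paper's one-line argument, since it fixes $j$ explicitly and keeps it distinct from the running summation index, and it also disposes of the vacuous case $k\geq r-1$.
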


\begin{proof}
For $\left\lceil \frac{n+\sigma}{D} \right\rceil - r \leq j \leq  \floor*{\frac{n}{D}}$ it holds that 
$$ (j+1)\cdots (j+r-1) \equiv 0 \; \bmod (j+k+1) \cdots (j+r-1). $$
Apply now Corollary $2.3$.
\end{proof}

\section{Quasi-polynomials}

Let $p:\mathbb N \rightarrow \mathbb C$ be a quasi-polynomial of degree $r-1 \geq 0$,
$$ p(n):=d_{r-1}(n)n^{r-1}+\cdots+d_1(n)n +d_0(n), $$
where $d_m(n)$'s are periodic functions with integral period $D>0$ and $d_{r-1}(n)$ is not identically zero.

According to \cite[Proposition 4.4.1]{stanley}, we have
$$\sum_{n=0}^{\infty}p(n)z^n = \frac{L(z)}{M(z)},$$ where $L(z),M(z)\in \mathbb C[z]$, every zero $\lambda$ of $M(z)$ satisfies $\lambda^D=1$
(provided $\frac{L(z)}{M(z)}$ has been reduced to lowest terms), and $\deg L(z)<\deg M(z)$. Moreover,
$$p(n)=\sum_{\lambda^D=1} P_{\lambda}(n) \lambda^{-n},$$ where each $P_{\lambda}(n)$ is a polynomial function with $\deg P_{\lambda}(n) \leq m(\lambda)-1$, where $m(\lambda)$ is the multiplicity of $\lambda$ as a root of $M(z)$.
We define the \emph{polynomial part} of $p(n)$ to be the polynomial function $P(n):=P_1(n)$.

Let $\gamma\in \mathbb C$ with $\gamma^D=1$. It holds that 
$$p_{\gamma}(n):=\gamma^{n}p(n) = \sum_{\lambda^D=1} P_{\lambda}(n) (\gamma\cdot \lambda^{-1})^n,$$
hence $P_{\gamma}(n)$ is the polynomial part of $p_{\gamma}(n)$.

\begin{prop} (\cite[Proposition 2.5]{lucrare})
It holds that $$P_{\gamma}(n) = R_{\gamma,m(\gamma)}n^{r-1} + \cdots + R_{\gamma,2}n + R_{\gamma,1},$$
where $R_{\gamma,m}=\frac{1}{D}\sum_{v=0}^{D-1} \gamma^{v}d_{m-1}(v)$, $1\leq m\leq m(\gamma)$.
\end{prop}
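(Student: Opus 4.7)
The plan is to recover $P_\gamma$ via the discrete Fourier expansion of the periodic coefficients $d_m$ and then use the uniqueness of the decomposition $p(n)=\sum_{\lambda^D=1}P_\lambda(n)\lambda^{-n}$ recalled from Stanley. First, because each $d_m$ has period $D$ it factors through $\mathbb Z/D\mathbb Z$, and the characters $\{n\mapsto \lambda^{-n}\}_{\lambda^D=1}$ form an orthogonal basis for the complex-valued functions on $\mathbb Z/D\mathbb Z$. Fourier inversion therefore gives
$$ d_m(n) \;=\; \sum_{\lambda^D=1} \hat d_m(\lambda)\,\lambda^{-n},\qquad \hat d_m(\lambda)\;=\;\frac{1}{D}\sum_{v=0}^{D-1}\lambda^v d_m(v). $$
Substituting this into $p(n)=\sum_{m=0}^{r-1}d_m(n)\,n^m$ and swapping the two finite sums yields
$$ p(n) \;=\; \sum_{\lambda^D=1}\Bigl(\sum_{m=0}^{r-1}\hat d_m(\lambda)\,n^m\Bigr)\lambda^{-n}. $$

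Next, compare this expression with the Stanley decomposition $p(n)=\sum_{\lambda^D=1}P_\lambda(n)\,\lambda^{-n}$. Since the family $\{n^j\lambda^{-n}:j\ge 0,\;\lambda^D=1\}$ is linearly independent over $\mathbb C$ (a short Vandermonde-type argument, combined with the fact that a nonzero polynomial has only finitely many zeros), that decomposition is unique. Identifying the polynomial multiplier of $\lambda^{-n}$ at $\lambda=\gamma$ therefore gives
$$ P_\gamma(n) \;=\; \sum_{m=0}^{r-1}\hat d_m(\gamma)\,n^m, $$
and reading off the coefficient of $n^{m-1}$ for $1\le m\le m(\gamma)$ produces exactly
$$ R_{\gamma,m}\;=\;\hat d_{m-1}(\gamma)\;=\;\frac{1}{D}\sum_{v=0}^{D-1}\gamma^v d_{m-1}(v), $$
as claimed.

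The only non-bookkeeping step is the uniqueness of the decomposition used in the comparison; this is the main obstacle, but it can be dispatched once and for all by the linear independence lemma for the sequences $n^j\lambda^{-n}$, or equivalently by matching the partial fraction expansion of the generating function $\sum p(n)z^n=L(z)/M(z)$ whose poles are at the $D$-th roots of unity. Everything else, namely the Fourier inversion on the finite cyclic group $\mathbb Z/D\mathbb Z$, the interchange of two finite sums, and the extraction of a coefficient, is routine.
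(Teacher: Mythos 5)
Your argument is correct. Note that the paper offers no proof of this proposition at all --- it is recalled verbatim from the authors' earlier paper \cite[Proposition 2.5]{lucrare} --- so there is no in-text proof to compare against; but the route you take (finite Fourier inversion of the periodic coefficients $d_m$ on $\mathbb Z/D\mathbb Z$, substitution into $p(n)=\sum_m d_m(n)n^m$, and identification of the polynomial attached to $\gamma^{-n}$ via the uniqueness of the exponential--polynomial decomposition) is the standard argument and is exactly what the cited result rests on. One cosmetic point worth making explicit: your identity gives $P_\gamma(n)=\sum_{m=0}^{r-1}\hat d_m(\gamma)n^m$, while the statement displays $P_\gamma$ as a polynomial of degree $m(\gamma)-1$ (the exponent $r-1$ on the leading term is a typo for $m(\gamma)-1$); the vanishing of $\hat d_m(\gamma)$ for $m\geq m(\gamma)$ follows from combining your uniqueness step with the degree bound $\deg P_\gamma\leq m(\gamma)-1$ in Stanley's decomposition, so your conclusion for $1\leq m\leq m(\gamma)$ is exactly what is claimed.
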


Consider the decomposition
\begin{equation}
  \sum_{n=0}^{\infty}p(n)z^n = \frac{L(z)}{M(z)} = \sum_{M(\lambda)=0}\sum_{\ell=1}^{m(\lambda)}\frac{c_{\lambda,\ell}}{(\lambda-z)^{\ell}}. 
\end{equation}
%where $m(\lambda)$ is the multiplicity of $\lambda$ as a root of $M(z)$. 
Let $\gamma$ be a root of $M(z)$. Since the decomposition $(3.1)$ is unique, it follows that 
$$ \sum_{n=0}^{\infty}P_{\gamma}(n)\gamma^{-n}z^n = \sum_{\ell=1}^{m(\gamma)}\frac{c_{\gamma,\ell}}{(\gamma-z)^{\ell}} = 
\sum_{\ell=1}^{m(\gamma)} \frac{c_{\gamma,\ell}}{\gamma^{\ell}}\left(\sum_{n=0}^{\infty} \gamma^{-n}z^n \right)^{\ell} = \sum_{\ell=1}^{m(\gamma)} 
\frac{c_{\gamma,\ell}}{\gamma^{\ell}} \sum_{n=0}^{\infty} \binom{n+\ell-1}{\ell-1} \gamma^{-n}z^n. $$
It follows that
\begin{equation}
P_{\gamma}(n) =  \sum_{\ell=1}^{m(\gamma)}  \frac{c_{\gamma,\ell}}{\gamma^{\ell}}\binom{n+\ell-1}{\ell-1}.
\end{equation}

The \emph{Stirling numbers of the second kind}, denoted by $\stirr{n}{k}$, count the number of ways to partition a set of $n$ labelled 
objects into $k$ nonempty unlabelled subsets. They are related with the unsigned Stirling numbers of the first kind by
\begin{equation}
\sum_{k=0}^n \stirr{n}{k} \sum_{\ell=0}^k (-1)^{\ell} \stir{k}{\ell} = (-1)^n
\end{equation}

\begin{prop}
For each $1\leq m \leq m(\gamma)$ it holds that
$$ c_{\gamma,m} = \gamma^{m}(m-1)! \sum_{\ell=m}^{m(\gamma)}(-1)^{\ell-m}\stirr{\ell}{m}\frac{1}{D}\sum_{v=0}^{D-1} \gamma^{v}d_{\ell-1}(v).\Box$$
\end{prop}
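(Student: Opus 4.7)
The plan is to combine the two expressions for $P_\gamma(n)$ at our disposal: on one hand, Proposition $3.1$ gives $P_\gamma(n) = \sum_{m=1}^{m(\gamma)} R_{\gamma,m}\,n^{m-1}$ with $R_{\gamma,m} = \frac{1}{D}\sum_{v=0}^{D-1}\gamma^v d_{m-1}(v)$; on the other hand, identity $(3.2)$ gives $P_\gamma(n) = \sum_{\ell=1}^{m(\gamma)} \frac{c_{\gamma,\ell}}{\gamma^\ell}\binom{n+\ell-1}{\ell-1}$. Matching coefficients of $n^{m-1}$ will produce an upper-triangular linear system relating the $R_{\gamma,m}$ to the $c_{\gamma,\ell}$, which I will then invert by Stirling inversion.

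First I would expand the binomial in the monomial basis. Using the rising-factorial identity $\prod_{k=0}^{\ell-1}(n+k) = \sum_{k=0}^{\ell}\stir{\ell}{k}\,n^k$ together with the vanishing $\stir{\ell}{0}=0$ for $\ell \ge 1$, one obtains
\[ \binom{n+\ell-1}{\ell-1} = \frac{(n+1)(n+2)\cdots(n+\ell-1)}{(\ell-1)!} = \frac{1}{(\ell-1)!}\sum_{k=1}^{\ell}\stir{\ell}{k}\,n^{k-1}. \]
Plugging this into $(3.2)$, interchanging the two sums, and equating the coefficient of $n^{m-1}$ with the corresponding coefficient from Proposition $3.1$ yields the triangular relation
\[ R_{\gamma,m} = \sum_{\ell=m}^{m(\gamma)} \stir{\ell}{m}\,\frac{c_{\gamma,\ell}}{\gamma^\ell(\ell-1)!},\qquad 1\le m\le m(\gamma). \]

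The last step is Stirling inversion. The orthogonality $\sum_{m}\stir{k}{m}\,(-1)^{m-\ell}\stirr{m}{\ell}=\delta_{k\ell}$ (essentially the content of $(3.3)$, expressing that the Stirling matrices of the first and second kinds are, up to signs, inverse to each other) allows me to multiply the recursion above by $(-1)^{m-\ell}\stirr{m}{\ell}$ and sum over $m$; the inner sum over $m$ then collapses to a Kronecker delta, leaving
\[ \frac{c_{\gamma,\ell}}{\gamma^\ell(\ell-1)!} = \sum_{m=\ell}^{m(\gamma)}(-1)^{m-\ell}\stirr{m}{\ell}\,R_{\gamma,m}. \]
Swapping the roles of $\ell$ and $m$ and substituting the definition of $R_{\gamma,\ell}$ yields the asserted formula. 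The only delicate point is the bookkeeping of indices in the Stirling orthogonality: one must take care that $m$ appears as the inner index of $\stirr{m}{\ell}$ and the outer index of $\stir{\ell'}{m}$ so that the right-hand orthogonality applies. A direct check for $m(\gamma)\in\{2,3\}$ pins down the correct sign convention and confirms the pairing.
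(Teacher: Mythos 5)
Your proposal is correct and follows essentially the same route as the paper: both equate the coefficient of $n^{m-1}$ in the two representations of $P_\gamma(n)$ (Proposition $3.1$ versus $(3.2)$) to obtain the triangular system $R_{\gamma,m}=\sum_{\ell\ge m}\stir{\ell}{m}\,c_{\gamma,\ell}/(\gamma^\ell(\ell-1)!)$ and then invert it by the first/second-kind Stirling orthogonality. Your index bookkeeping and signs check out (and are in fact cleaner than the paper's, whose displayed versions of $(3.3)$ and $(3.4)$ contain index slips that cancel in the final formula).
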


\begin{proof}
From Proposition $3.1$ and $(3.2)$ it follows that $R_{\gamma,m}=0$ for all $m > m(\gamma)$ and 
\begin{equation}
R_{\gamma,m}=\sum_{\ell=1}^{m(\gamma)}\frac{c_{\gamma,\ell}}{\gamma^{\ell}(\ell-1)!} \stir{\ell}{m-1},\;1\leq m\leq m(\gamma).
\end{equation}
From $(3.3)$ and $(3.4)$ it follows that
\begin{equation}
 c_{\gamma,m} = \gamma^{m}(m-1)! \sum_{\ell=m}^{m(\gamma)}(-1)^{\ell-m}\stirr{\ell}{m} R_{\gamma,\ell},
\end{equation}
hence, the conclusion follows from Proposition $3.1$.
\end{proof}

\section{Sylvester waves and the partial fraction decomposition of $\sum_{n=0}^{\infty}\pa(n)z^n$}

Let $\mathbf a = a_1,\ldots,a_r$ a sequence of positive integers. We write $\pa(n)$ as a sum of waves
$$ \pa(n)=\sum_{j} W_{j}(n,\mathbf a),$$ 
where the sum is taken over the $j\geq 1$ with $j|a_i$ for some $1\leq i\leq r$. We have that 
\begin{equation} W_j(n,\mathbf a) =  P_{\mathbf a, \rho_j}(n) \rho_j^{-n},\end{equation}
where $\rho_j:=e^{\frac{2\pi i}{j}}$ and $P_{\mathbf a,\rho_j}(n)$ is the polynomial part of the 
quasi-polynomial $\rho_j^n\pa(n)$.
% Let $\zag(s):=\sum_{n=1}^{\infty}\frac{\pag(n)}{n^s}$. 
% For each $1\leq m \leq m(\gamma)$ let $R_{\gamma,m}:=\Res_{s=m}\zag(s)$.

\begin{prop}
We have that
$$ W_{j}(n,\mathbf a) =\rho_j^{-n}(R_{j,m(j)}\cdot n^{m(j)-1} + \cdots + R_{j,2}\cdot n + R_{j,1}), $$
where 
$m(j) = \#\{i:\; j|a_i\}$ and $R_{j,m}=\frac{1}{D}\sum_{v=0}^{D-1}\rho_j^v d_{\mathbf a,m-1}(v)$ for $1\leq m\leq m(j)$.
\end{prop}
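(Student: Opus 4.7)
The plan is to derive Proposition 4.2 as a direct specialization of Proposition 3.1 (applied to the quasi-polynomial $\pa(n)$) combined with the identity (4.1). No new analytic input is needed.

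First, I would record the starting point. By (4.1), we have
\[
W_j(n,\mathbf a) \;=\; \rho_j^{-n}\, P_{\mathbf a,\rho_j}(n),
\]
where $P_{\mathbf a,\rho_j}(n)$ is the polynomial part of the shifted quasi-polynomial $\rho_j^n \pa(n)$. Since $\pa(n)$ has period $D$, degree $r-1$, and coefficients $d_{\mathbf a,0}(n),\dots,d_{\mathbf a,r-1}(n)$, Proposition 3.1 applied with $\gamma=\rho_j$ yields
\[
P_{\mathbf a,\rho_j}(n) \;=\; R_{\rho_j,m(\rho_j)}\,n^{m(\rho_j)-1}+\cdots+R_{\rho_j,2}\,n+R_{\rho_j,1},
\]
with
\[
R_{\rho_j,m} \;=\; \frac{1}{D}\sum_{v=0}^{D-1}\rho_j^{v}\,d_{\mathbf a,m-1}(v), \qquad 1\le m\le m(\rho_j).
\]

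The only remaining point is to identify the multiplicity $m(\rho_j)$ appearing in Proposition 3.1 with the integer $m(j)=\#\{i:\,j\mid a_i\}$ from the statement to be proved. For this I would observe that
\[
\sum_{n=0}^{\infty}\pa(n)z^n \;=\; \frac{1}{(1-z^{a_1})\cdots(1-z^{a_r})},
\]
so the denominator $M(z)=\prod_{i=1}^r(1-z^{a_i})$ vanishes at $\rho_j$ with multiplicity equal to $\#\{i:\rho_j^{a_i}=1\}=\#\{i:j\mid a_i\}=m(j)$. Since the numerator is the constant $1\ne 0$, the fraction is already in lowest terms, so no cancellation of $(z-\rho_j)$-factors occurs. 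Hence $m(\rho_j)=m(j)$, and substitution gives exactly the claimed formula.

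There is no real obstacle here: the proof is essentially bookkeeping, and the only point that warrants a sentence of justification is the identification $m(\rho_j)=m(j)$, which is why I would place slight emphasis on the fact that the generating function is already reduced.
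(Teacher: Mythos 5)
Your proposal is correct and follows exactly the paper's argument: apply Proposition 3.1 to the quasi-polynomial $\pa(n)$ with $\gamma=\rho_j$ and combine with (4.1). The extra sentence identifying the multiplicity $m(\rho_j)$ with $m(j)=\#\{i:\,j\mid a_i\}$ (using that the numerator of the generating function is $1$, so no cancellation occurs) is a worthwhile detail that the paper leaves implicit.
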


\begin{proof}
It follows from Proposition $3.1$ and $(4.1)$.
\end{proof}

\begin{prop}
For any positive integer $j$ with $j|a_i$ for some $1\leq i\leq r$, we have that:
$$ W_{j}(n,\mathbf a) = \frac{1}{D(r-1)!} \sum_{m=1}^r \sum_{\ell=1}^{j} \rho_j^{\ell} \sum_{k=m-1}^{r-1} 
\stir{r}{k} (-1)^{k-m+1} \binom{k}{m-1} \cdot$$ $$\cdot \sum_{\substack{0\leq j_1\leq \frac{D}{a_1}-1,\ldots, 0\leq j_r\leq \frac{D}{a_r}-1 \\ a_1j_1+\cdots+a_rj_r \equiv \ell (\bmod j)}} D^{-k} (a_1j_1+\cdots+a_rj_r)^{k-m+1} n^{m-1}.$$
\end{prop}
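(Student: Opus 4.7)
My plan starts from Proposition 4.1, which gives
$$W_j(n,\mathbf a)=\rho_j^{-n}\sum_{m=1}^{m(j)} R_{j,m}\,n^{m-1},\qquad R_{j,m}=\frac{1}{D}\sum_{v=0}^{D-1}\rho_j^{v}\,d_{\mathbf a,m-1}(v).$$
So the whole task reduces to rewriting each coefficient $R_{j,m}$ in the explicit ``tuple $+$ Stirling'' shape that appears on the right-hand side of the statement, and then summing over $m$.

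First I substitute Theorem 1.1 (with its index $m$ replaced by $m-1$) into the formula for $R_{j,m}$. This produces a quadruple sum: over $v\in\{0,\dots,D-1\}$, over tuples $(j_1,\dots,j_r)$ with $0\le j_k\le \tfrac{D}{a_k}-1$ and $a_1j_1+\cdots+a_rj_r\equiv v\pmod D$, and over $k$ from $m-1$ to $r-1$. For each admissible tuple the residue $v\in\{0,\dots,D-1\}$ is uniquely determined, so interchanging summation eliminates the outer sum in $v$ and replaces $\rho_j^v$ by $\rho_j^{a_1j_1+\cdots+a_rj_r}$. Here I use $j\mid D$, which holds because $j\mid a_i$ for some $i$ and $a_i\mid D$, so that $\rho_j^D=1$ and the reduction mod $D$ inside the exponent is harmless.

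Next I partition the tuples by the residue $\ell$ of $a_1j_1+\cdots+a_rj_r$ modulo $j$. Since $\rho_j^{a_1j_1+\cdots+a_rj_r}$ depends only on this residue, it factors as $\rho_j^{\ell}$, producing the outer sum $\sum_{\ell=1}^{j}\rho_j^{\ell}$ together with the congruence condition $a_1j_1+\cdots+a_rj_r\equiv \ell \pmod j$ on the remaining tuple sum, exactly matching the claimed expression. Reinserting the factor $n^{m-1}$ from Proposition 4.1 and summing over $m$ yields the stated formula; the extension of the upper bound from $m=m(j)$ (as in Proposition 4.1) to $m=r$ is justified by the vanishing $R_{j,m}=0$ for $m>m(j)$, which is exactly the fact used in the proof of Proposition 3.3.

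The only real obstacle is combinatorial bookkeeping: tracking the signs $(-1)^{k-m+1}$, the coefficients $\binom{k}{m-1}$, $\stir{r}{k}$, the powers $D^{-k}$ and $(a_1j_1+\cdots+a_rj_r)^{k-m+1}$ through the index shift $m\mapsto m-1$ when passing from Theorem 1.1 to $R_{j,m}$, and keeping straight the roles of $v$, $\ell$ and the tuple indices when sums are interchanged. No new analytic or number-theoretic ingredient beyond $\rho_j^D=1$ is required.
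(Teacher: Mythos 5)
Your route is exactly the paper's: the printed proof is the one‑line citation of Proposition 4.1 and Theorem 1.1, and your substitution of Theorem 1.1 (with $m\mapsto m-1$) into $R_{j,m}=\frac{1}{D}\sum_{v=0}^{D-1}\rho_j^{v}d_{\mathbf a,m-1}(v)$, the collapse of the $v$-sum (each admissible tuple determines its residue mod $D$, and $\rho_j^{v}=\rho_j^{a_1j_1+\cdots+a_rj_r}$ since $j\mid D$), the regrouping by the residue $\ell$ mod $j$, and the extension of the $m$-sum past $m(j)$ via $R_{j,m}=0$ are precisely the bookkeeping the authors omit.

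There is, however, one step that does not go through as you wrote it: Proposition 4.1 carries the prefactor $\rho_j^{-n}$, and nothing in your manipulation of the coefficients $R_{j,m}$ cancels it, so what your computation actually establishes is
$$W_j(n,\mathbf a)=\rho_j^{-n}\cdot\bigl(\text{right-hand side of the Proposition}\bigr),$$
whereas your closing sentence (``reinserting the factor $n^{m-1}$ \dots yields the stated formula'') silently discards $\rho_j^{-n}$. The discrepancy is not cosmetic: for $j>1$ the stated right-hand side is a genuine polynomial in $n$ while $W_j(n,\mathbf a)$ is not --- for instance, with $\mathbf a=(1,2)$ and $j=2$ one has $W_2(n,\mathbf a)=(-1)^{n}/4$, while the right-hand side is a constant. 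So the identity as literally printed is false, being off by exactly the factor $\rho_j^{-n}$; equivalently, after shifting $\ell\mapsto\ell+n$ the inner congruence should read $a_1j_1+\cdots+a_rj_r\equiv n+\ell\ (\bmod\ j)$ (compare the ``$\equiv n$'' that survives in Proposition 4.4). Your argument, carried out honestly, proves the corrected statement; to finish you must either retain the prefactor $\rho_j^{-n}$ or absorb it by that change of variable in the $\ell$-sum, and you should say so explicitly rather than assert that the stated formula follows.
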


\begin{proof}
It follows from Proposition $4.1$ and Theorem $1.1$.
\end{proof}

\begin{prop}
If $a_1,\ldots,a_r$ are pairwise coprimes then
$$\pa(n) = \sum_{m=1}^r \frac{(-1)^{r-m}}{(a_1\cdots a_r)(m-1)!}\sum_{i_{1}+\cdots+i_r=r-m}  \frac{B_{i_1}\cdots B_{i_r}}{i_1!\cdots i_r!} a_1^{i_1} \cdots a_r^{i_r} n^{m-1} +  \frac{1}{D(r-1)!} \cdot$$
$$ \cdot \sum_{j \neq 1} \sum_{\ell=1}^{j} \rho_j^{\ell}  \sum_{k=0}^{r-1} \frac{1}{D^k}\stir{r}{k} (-1)^{k} 
\sum_{\substack{0\leq j_1\leq \frac{D}{a_1}-1,\ldots, 0\leq j_r\leq \frac{D}{a_r}-1 \\ a_1j_1+\cdots+a_rj_r \equiv \ell (\bmod j)}} D^{-k} (a_1j_1+\cdots+a_rj_r)^{k}, $$
where $j|a_i$ for some $1\leq i\leq r$.
\end{prop}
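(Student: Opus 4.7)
The plan is to combine the Sylvester wave decomposition $\pa(n) = \sum_{j} W_j(n, \mathbf a)$, where $j \geq 1$ ranges over divisors of some $a_i$, with the identities already established for the individual waves. The crucial consequence of pairwise coprimality is elementary: if $j > 1$ and $j \mid a_i$ and $j \mid a_{i'}$ with $i \neq i'$, then $j \mid \gcd(a_i, a_{i'}) = 1$, which is impossible. Hence $m(j) = \#\{i : j \mid a_i\}$ equals $1$ for every $j \neq 1$ appearing in the decomposition, while $m(1) = r$.

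For the principal wave $W_1(n, \mathbf a) = \Pa(n)$ I would quote Theorem $1.4$ directly and reindex via $m := r - u$ (so that $u = r - m$ and $r - 1 - u = m - 1$); this matches Theorem $1.4$ term-by-term with the first summand on the right-hand side of the statement, producing the prefactor $(-1)^{r-m}/((a_1\cdots a_r)(m-1)!)$ and the exponent $n^{m-1}$ in exactly the required form.

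For each $j \neq 1$ in the decomposition, the equality $m(j) = 1$ together with Proposition $4.1$ forces $W_j(n, \mathbf a) = \rho_j^{-n} R_{j, 1}$, so that $R_{j, m} = 0$ for all $m \geq 2$. Consequently, in the formula of Proposition $4.2$ only the $m = 1$ summand survives: specialising $m = 1$ makes $\binom{k}{m-1} = 1$, $(-1)^{k-m+1} = (-1)^k$, collapses the exponent $k - m + 1$ to $k$, and sets $n^{m-1} = 1$. What remains is exactly the summand inside $\sum_{j \neq 1}$ of the statement (with the implicit $\rho_j^{-n}$ absorbed into the $\rho_j^{\ell}$ sum by a shift $\ell \mapsto \ell - n \pmod j$, legitimate because $j \mid D$). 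Adding $W_1$ and $\sum_{j \neq 1} W_j$ yields the displayed identity. I do not anticipate any substantive obstacle: the argument is simply pairwise coprimality collapsing the general wave formula of Proposition $4.2$ to its $m = 1$ specialisation, combined with Theorem $1.4$; the only minor bookkeeping point is tracking the $\rho_j^{-n}$ factor against the congruence condition modulo $j$.
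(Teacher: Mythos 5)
Your argument is essentially identical to the paper's proof: pairwise coprimality forces $m(j)=1$ for every $j\neq 1$, so each such wave is a quasi-polynomial of degree $0$ and Proposition $4.2$ collapses to its $m=1$ term, while the principal wave is the polynomial part given by Theorem $1.4$ (which the paper's one-line proof uses implicitly but does not cite). Your extra care with the reindexing $u=r-m$ and with tracking the $\rho_j^{-n}$ factor against the congruence modulo $j$ only makes explicit the bookkeeping the paper leaves silent.
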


\begin{proof}
Since $a_1,\ldots,a_r$ are pairwise coprimes, it follows that $W_j(\mathbf a, n)$ is a quasi-polynomial of degree $0$. Hence, the conclusion
follows from Proposition $4.1$ and Proposition $4.2$.
\end{proof}

Another formulas for $\pa(n)$ in the case that $a_1,\ldots,a_r$ are pairwise coprimes were proved in \cite[Theorem C, pag 113]{comtet}, \cite[Theorem 3.1]{babeck} and \cite{komatsu}.

We consider the decomposition
$$ \sum_{n=0}^{\infty}\pa(n)z^n = \frac{1}{(1-z^{a_1})\cdots (1-z^{a_r})} = \sum_{\lambda} \sum_{\ell=1}^{m(\lambda)} 
\frac{c_{\lambda,\ell}}{(\lambda-z)^{\ell}}, $$
where the sum it taken over the $\lambda$'s with $\lambda^{a_i}=1$ for some $1\leq i\leq r$.

\begin{prop}
Let $j\geq 1$. For $1\leq m\leq m(j)$ we have that
$$ c_{\rho_j,m}=\frac{\rho_j^m (m-1)!}{D} \sum_{t=m}^{m(j)} (-1)^{t-m}\stirr{t}{m} \sum_{\ell=1}^{j} \rho_j^{\ell} \sum_{k=m-1}^{r-1} 
\stir{r}{k} (-1)^{k-m+1} \binom{k}{m-1} \cdot $$ $$ \cdot \sum_{\substack{0\leq j_1\leq \frac{D}{a_1}-1,\ldots, 0\leq j_r\leq \frac{D}{a_r}-1 \\ a_1j_1+\cdots+a_rj_r \equiv n (\bmod j)}} D^{-k} (a_1j_1+\cdots+a_rj_r)^{k-m+1}.$$
\end{prop}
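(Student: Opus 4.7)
The natural approach is to specialize Proposition $3.2$ to $p(n)=\pa(n)$ and $\gamma=\rho_j$, then unfold the coefficients $R_{\rho_j,t}$ via Proposition $4.1$, and finally expand $d_{\mathbf a, t-1}(v)$ using Theorem $1.1$. Notice that $m(\rho_j)=m(j)=\#\{i : j\mid a_i\}$, since $\rho_j$ is a root of $1-z^{a_i}$ precisely when $j\mid a_i$. Proposition $3.2$ then gives
$$ c_{\rho_j,m} = \rho_j^m (m-1)! \sum_{t=m}^{m(j)} (-1)^{t-m} \stirr{t}{m} R_{\rho_j,t}, $$
and Proposition $4.1$ gives $R_{\rho_j,t}=\frac{1}{D}\sum_{v=0}^{D-1}\rho_j^v d_{\mathbf a,t-1}(v)$.

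Next I would substitute the explicit expression for $d_{\mathbf a,t-1}(v)$ supplied by Theorem $1.1$ (with the outer index parameter set to $m=t-1$), which contributes an inner sum over $k$ from $t-1$ to $r-1$ and an inner sum over the tuples $(j_1,\ldots,j_r)$ with $S:=a_1j_1+\cdots+a_rj_r\equiv v\pmod D$. This produces the factor $\frac{1}{D(r-1)!}\sum_{v=0}^{D-1}\rho_j^v \sum_{S\equiv v\,(\bmod\,D)} (\cdots)$ inside the outer $t$-sum.

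The key rearrangement is to interchange the $v$-summation with the tuple summation. Because every admissible tuple satisfies $0\leq S<rD$, the value $v=S\bmod D$ is unique, so the double sum $\sum_v\sum_{S\equiv v}$ becomes a single sum over all admissible tuples weighted by $\rho_j^{v}=\rho_j^{S}$ (here we use $\rho_j^D=1$, which holds since $j\mid a_i\mid D$). Since $\rho_j^S$ depends only on $S\bmod j$, we may regroup by $\ell:=S\bmod j$, writing $\ell\in\{1,\ldots,j\}$ (with $\ell=j$ representing the residue $0$, using $\rho_j^j=1$). This turns the inner sum into $\sum_{\ell=1}^{j}\rho_j^\ell \sum_{S\equiv \ell\,(\bmod\,j)} (\cdots)$, which is exactly the form appearing in the statement.

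The only real obstacle is the bookkeeping of the three nested sums over $t$, $k$, and the tuple residues, and carefully tracking the sign $(-1)^{k-t+1}$ and the binomial $\binom{k}{t-1}$ coming from Theorem $1.1$; once the regrouping $S\bmod D\leadsto S\bmod j$ is justified by $j\mid D$, collecting the remaining constants $\frac{\rho_j^m(m-1)!}{D(r-1)!}$ yields the claimed identity.
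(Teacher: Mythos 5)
Your proof follows exactly the paper's (very terse) argument: apply Proposition $3.2$ with $\gamma=\rho_j$ (so $m(\gamma)=m(j)$), expand $R_{\rho_j,t}=\frac{1}{D}\sum_v\rho_j^v d_{\mathbf a,t-1}(v)$ via Proposition $4.1$ and Theorem $1.1$, and regroup the sum over $v\bmod D$ into a sum over $\ell\bmod j$ using $\rho_j^D=1$. One remark: carrying out your substitution literally yields inner sums indexed by $t$ (i.e.\ $k$ from $t-1$ to $r-1$, the binomial $\binom{k}{t-1}$, the power $(a_1j_1+\cdots+a_rj_r)^{k-t+1}$) together with an additional factor $\frac{1}{(r-1)!}$, and the congruence should read $\equiv\ell\pmod j$ rather than $\equiv n\pmod j$; these are typographical slips in the displayed statement, and your derivation gives the correct form.
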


\begin{proof}
It follows from Proposition $3.2$ and Theorem $4.1$.
\end{proof}

\begin{prop}
For $1\leq m\leq r$ it holds that
$$c_m = \frac{(m-1)!}{a_1\cdots a_r} 
\sum_{\ell=m}^r (-1)^{\ell-m}\frac{\stirr{\ell}{m}} {(\ell-1)!} \sum_{i_1+\cdots+i_r=r-\ell}\frac{B_{i_1}\cdots B_{i_r}}{i_1!\cdots i_r!}a_1^{i_1} \cdots a_r^{i_r},$$
where $c_m:=c_{1,m}$.
\end{prop}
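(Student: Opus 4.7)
The plan is to combine the general partial-fraction identity of Proposition $3.2$ with the explicit Bernoulli formula for the polynomial part of $\pa(n)$ given in Theorem $1.4$. First, since $\lambda=1$ is a simple root of each factor $(1-z^{a_i})$, its multiplicity as a root of $(1-z^{a_1})\cdots(1-z^{a_r})$ is $m(1)=r$. Applying Proposition $3.2$ with $\gamma=1$ therefore yields
$$ c_{1,m}=(m-1)!\sum_{\ell=m}^{r}(-1)^{\ell-m}\stirr{\ell}{m} R_{1,\ell}, $$
so what remains is to identify the numbers $R_{1,\ell}$ explicitly in closed form.

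By Proposition $3.1$ applied with $\gamma=1$, the quantity $R_{1,\ell}$ is exactly the coefficient of $n^{\ell-1}$ in the polynomial part $P_{\mathbf a}(n)$ of $\pa(n)$. My next step is to read this coefficient off from Theorem $1.4$: the substitution $r-1-u=\ell-1$, equivalently $u=r-\ell$, gives
$$ R_{1,\ell}=\frac{(-1)^{r-\ell}}{a_1\cdots a_r\,(\ell-1)!}\sum_{i_1+\cdots+i_r=r-\ell}\frac{B_{i_1}\cdots B_{i_r}}{i_1!\cdots i_r!}\,a_1^{i_1}\cdots a_r^{i_r}. $$

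Finally, I substitute this expression for $R_{1,\ell}$ into the formula for $c_{1,m}$, combine the sign factors coming from $(-1)^{\ell-m}$ and $(-1)^{r-\ell}$, and pull the constant $\frac{1}{a_1\cdots a_r}$ out of the outer sum; the stated identity then drops out. No substantive obstacle is expected: the argument mirrors the proof of Proposition $4.4$, with Theorem $1.4$ playing the role there played by the quasi-polynomial formula of Theorem $1.1$. The only delicate point is careful bookkeeping of signs and factorials through the reindexing from $u$ to $\ell$, and verifying that $R_{1,\ell}=0$ for $\ell>r$ is automatic from $\deg P_{\mathbf a}\leq r-1$, so the truncation of the sum at $\ell=r$ is consistent with $m(1)=r$.
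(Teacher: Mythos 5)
Your argument is exactly the paper's (the printed proof is a one-line citation of the same three ingredients: equation $(3.5)$ from Proposition $3.2$ with $\gamma=1$ and $m(1)=r$, Proposition $3.1$ identifying $R_{1,\ell}$ as the coefficient of $n^{\ell-1}$ in $P_{\mathbf a}(n)$, and Theorem $1.4$ read off at $u=r-\ell$), and each individual step is right. The one point you explicitly deferred — the sign bookkeeping — is, however, where the trouble lies: combining the $(-1)^{\ell-m}$ from $(3.5)$ with the $(-1)^{r-\ell}$ from $R_{1,\ell}$ gives the $\ell$-independent factor $(-1)^{r-m}$, not the $(-1)^{\ell-m}$ appearing in the statement you are asked to prove. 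So the identity as printed does not ``drop out''; your derivation, carried to completion, yields
$$c_{1,m}=\frac{(-1)^{r-m}(m-1)!}{a_1\cdots a_r}\sum_{\ell=m}^{r}\frac{\stirr{\ell}{m}}{(\ell-1)!}\sum_{i_1+\cdots+i_r=r-\ell}\frac{B_{i_1}\cdots B_{i_r}}{i_1!\cdots i_r!}a_1^{i_1}\cdots a_r^{i_r},$$
which is the version consistent with Corollary $4.6$ (there $(-1)^m c_{1,m}$ carries the overall factor $(-1)^{r}$) and with a direct check for $\mathbf a=(1,1)$, where the printed formula would give $c_{1,1}=-2$ instead of the correct value $0$. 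In short: your method is the paper's method and is sound, but you should actually execute the final substitution, at which point you will find you have proved a (correct) statement differing from the printed one by the factor $(-1)^{r-\ell}$ inside the sum — i.e.\ the proposition as stated contains a sign typo.
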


\begin{proof}
It follows from Theorem $1.4$, Proposition $3.1$ and Proposition $4.4$.
\end{proof}

Let $r\geq 1$, $\mathbf r:=(1,2,\ldots,r)$. \emph{Rademacher's coefficients} $c_{hk\ell}(r)$ are defined by
$$\sum_{n=0}^{\infty}p_{\mathbf r}(n)z^n = \frac{1}{(1-z)(1-z^2)\cdots(1-z^r)} = \sum_{0\leq h<k\leq r,\;(h,k)=1}\sum_{\ell=1}^{ \left\lfloor \frac{r}{k} \right\rfloor} \frac{c_{hk\ell}(r)}{(z-\omega_{hk})^{\ell}}, $$
where $\omega_{hk}:=e^{2\pi i \frac{h}{k}}$. In the previous notations $c_{hk\ell}(r)=(-1)^{\ell}c_{\omega_{hk},\ell}$. 
As a direct consequence of Proposition $4.5$ we get the following result of C.\ O'Sullivan \cite{cormac}:

\begin{cor}(\cite[Proposition 2.3]{cormac} )
For $1\leq m\leq r$ it holds that
$$c_{01m}(r) =   \frac{(-1)^{r}(m-1)!}{r!} 
\sum_{\ell=m}^r \frac{\stirr{\ell}{m}}{(\ell-1)!} \sum_{i_1+\cdots+i_r=r-\ell}\frac{B_{i_1}\cdots B_{i_r}}{i_1!\cdots i_r!}1^{i_1}2^{i_2} \cdots r^{i_r}.\Box$$ 
\end{cor}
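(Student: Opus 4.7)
The plan is to derive Corollary 4.6 as a one-line specialization of Proposition 4.5, combined with the convention-translation identity recorded in the paragraph immediately preceding the corollary.

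Concretely, I would first set $\mathbf{a} = \mathbf{r} := (1,2,\ldots,r)$ in Proposition 4.5. Under this substitution $a_1\cdots a_r$ collapses to $r!$ and the monomial $a_1^{i_1}\cdots a_r^{i_r}$ becomes $1^{i_1}2^{i_2}\cdots r^{i_r}$, so the resulting formula for $c_{1,m}$ already has the exact shape of the right-hand side of Corollary 4.6 up to an overall sign. Next I would invoke the conversion $c_{hk\ell}(r) = (-1)^{\ell}c_{\omega_{hk},\ell}$, which was established just before the corollary by comparing $(\lambda-z)^{\ell} = (-1)^{\ell}(z-\lambda)^{\ell}$ term by term across the two partial-fraction decompositions. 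Taking $(h,k) = (0,1)$, so that $\omega_{01} = e^{0} = 1$, this specializes to $c_{01m}(r) = (-1)^{m}c_{1,m}$. Substituting the specialized formula for $c_{1,m}$ produces the asserted identity.

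The only nontrivial piece is the sign accounting. The external $(-1)^{m}$ from the convention change has to merge with the $(-1)^{\ell-m}$ sitting inside Proposition 4.5 and with the $(-1)^{r-\ell}$ carried by the polynomial-part formula of Theorem 1.4 (which is what feeds Proposition 4.5 via Proposition 3.1 and Proposition 4.4) to yield a single factor of $(-1)^{r}$, uniform in $\ell$, that can be pulled out of the $\ell$-summation. I expect this to be the only obstacle: it is purely bookkeeping, with no new ingredient beyond results already stated. In particular, no special cancellation among the Bernoulli-Barnes terms is required; the $\ell$-independence of the outer sign follows directly from telescoping the three $\pm 1$ contributions.
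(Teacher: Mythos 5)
Your route is the same as the paper's: specialize Proposition 4.5 to $\mathbf a=(1,2,\ldots,r)$ and apply the conversion $c_{01m}(r)=(-1)^{m}c_{1,m}$ coming from $c_{hk\ell}(r)=(-1)^{\ell}c_{\omega_{hk},\ell}$. The final sign $(-1)^{r}$ you arrive at is correct, but the sign bookkeeping as you describe it does not go through if Proposition 4.5 is used as a black box. Taken literally, the printed Proposition 4.5 carries $(-1)^{\ell-m}$ inside the sum over $\ell$, so the conversion factor $(-1)^{m}$ produces $(-1)^{\ell}$, which is $\ell$-dependent and cannot be pulled out of the summation. The third factor $(-1)^{r-\ell}$ that you propose to import from Theorem 1.4 is not a separate ingredient you may layer on top of Proposition 4.5: had that proposition been correctly assembled from Theorem 1.4, Proposition 3.1 and equation (3.5), the factor would already be contained in its statement, and its overall sign would read $(-1)^{r-m}$ rather than $(-1)^{\ell-m}$. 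In other words, your computation silently corrects a sign error in the printed Proposition 4.5; invoked verbatim, that proposition yields an $\ell$-dependent sign and a false identity, so the step ``merge the three signs'' double-books $(-1)^{r-\ell}$.

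Concretely, Theorem 1.4 and Proposition 3.1 give $R_{1,\ell}=\frac{(-1)^{r-\ell}}{(a_1\cdots a_r)(\ell-1)!}\sum_{i_1+\cdots+i_r=r-\ell}\frac{B_{i_1}\cdots B_{i_r}}{i_1!\cdots i_r!}a_1^{i_1}\cdots a_r^{i_r}$, and $(3.5)$ gives $c_{1,m}=(m-1)!\sum_{\ell=m}^{r}(-1)^{\ell-m}\stirr{\ell}{m}R_{1,\ell}$; hence $c_{1,m}$ carries the uniform sign $(-1)^{r-m}$ and $c_{01m}(r)=(-1)^{m}c_{1,m}$ carries $(-1)^{r}$, as the corollary asserts. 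A check at $r=2$, $\mathbf a=(1,2)$: the true values are $c_{1,1}=\tfrac14$ and $c_{011}(2)=-\tfrac14$, and the corollary indeed gives $\tfrac12\bigl((-\tfrac12-1)+1\bigr)=-\tfrac14$, whereas the printed Proposition 4.5 gives $c_{1,1}=\tfrac12\bigl(-\tfrac32-1\bigr)=-\tfrac54$. So your argument is correct in substance, but it should either cite the corrected form of Proposition 4.5 (overall sign $(-1)^{r-m}$) or derive $c_{1,m}$ directly from Theorem 1.4, Proposition 3.1 and $(3.5)$, rather than present the $(-1)^{r-\ell}$ as an extra factor available on top of the proposition as stated.
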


% We recall the following result of Caboara and Mascia \cite{mascia}.
% \begin{lema}(\cite[Lemma 8]{mascia})
% Let $\delta:=\max\{\# I \;:\; \gcd(a_i)_{i\in I}\neq 1,\;I\subset\{1,\ldots,r\}\}$. For any $\gamma\neq 1$ with $\gamma^{a_i}=1$ for some $1\leq i\leq r$ it holds that $\deg(P_{\gamma,\mathbf a})\leq \delta-1$.
% \end{lema}

\section{Frobenius number}

Given a sequence of positive integers $\mathbf a = (a_1,\ldots,a_r)$ with $gcd(a_1,\ldots,a_r)=1$, the \emph{Frobenius number} of $\mathbf a$,
denoted by $F(\mathbf a)=F(a_1,\ldots,a_r)$ is the largest integer $n$ with the property that $\pa(n)=0$.

\begin{prop}
Let  $\mathbf a = (a_1,\ldots,a_r)$ with $gcd(a_1,\ldots,a_r)=1$ and $D=\lcm(a_1,\ldots,a_r)$. We have that 
$$F(a_1,\ldots,a_r)\leq D(r-1)-a_1-\cdots-a_r.$$
\end{prop}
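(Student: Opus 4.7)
The plan is to combine Corollary $1.3$ with a short surjectivity argument. By Corollary $1.3$, in order to conclude $\pa(n)\ge 1$ it is enough to exhibit integers $0\le j_k\le D/a_k-1$ ($1\le k\le r$) with $a_1j_1+\cdots+a_rj_r\equiv n\pmod D$ and $a_1j_1+\cdots+a_rj_r\le n$. Hence it suffices to produce such a tuple for every $n>D(r-1)-\sigma$, where $\sigma:=a_1+\cdots+a_r$.

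The core step will be to prove that the ``box-sum'' map
$$
\phi\colon\prod_{k=1}^{r}\{0,1,\ldots,D/a_k-1\}\longrightarrow\mathbb Z/D\mathbb Z,\qquad (j_1,\ldots,j_r)\mapsto \sum_{k=1}^r a_kj_k\pmod D,
$$
is surjective. For each fixed $k$, because $a_k\mid D$, the values $\{a_kj_k\bmod D:0\le j_k\le D/a_k-1\}$ are exactly the $D/a_k$ elements of the cyclic subgroup $\langle a_k\rangle\subseteq\mathbb Z/D\mathbb Z$. Any class in the subgroup $\langle a_1,\ldots,a_r\rangle$ can be written as $\sum a_k c_k\bmod D$ for some integers $c_k$, and by reducing each $c_k$ modulo $D/a_k$ (which leaves $a_kc_k$ unchanged mod $D$) we may assume $c_k$ lies in the prescribed box. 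Thus the image of $\phi$ equals $\langle a_1,\ldots,a_r\rangle=\gcd(a_1,\ldots,a_r,D)\,\mathbb Z/D\mathbb Z$, which is the whole of $\mathbb Z/D\mathbb Z$ since $\gcd(a_1,\ldots,a_r)=1$ and each $a_k$ divides $D$.

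To finish, I will fix $n>D(r-1)-\sigma$, choose via surjectivity a preimage $(j_1,\ldots,j_r)$ of $n\bmod D$ under $\phi$, and set $m:=\sum_{k=1}^r a_kj_k$. Then $m\le\sum_{k=1}^{r}a_k(D/a_k-1)=rD-\sigma$. If $m>n$, then since $m\equiv n\pmod D$ we would have $m\ge n+D>\bigl(D(r-1)-\sigma\bigr)+D=rD-\sigma$, contradicting $m\le rD-\sigma$. Hence $m\le n$, the condition of Corollary $1.3$ is violated, and $\pa(n)\ge 1$. This shows $F(\mathbf a)\le D(r-1)-\sigma$, as claimed.

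There is no real obstacle in this approach: the only non-trivial ingredient is the surjectivity of $\phi$, which is an immediate consequence of the coprimality hypothesis and the divisibility $a_k\mid D$; once that is available, a one-line pigeonhole argument on the fixed residue class mod $D$ delivers the bound.
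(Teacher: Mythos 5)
Your proof is correct and follows essentially the same route as the paper: both arguments rest on the surjectivity of the box-sum map onto $\mathbb Z/D\mathbb Z$, the bound $\sum_{k}a_k(D/a_k-1)=rD-\sigma$, and Corollary $1.3$; you merely run the argument in contrapositive form and spell out the surjectivity that the paper only asserts.
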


\begin{proof}
Let $n$ be an integer with $\pa(n)=0$. Since the map $$\varphi: \mathbb Z/a_1\mathbb Z \times \cdots \times \mathbb Z/a_r\mathbb Z \rightarrow \mathbb Z/D\mathbb Z,\;\varphi(\hat j_1,\ldots,\hat j_r):=\overline{a_1j_1+\cdots+a_rj_r}$$
is a surjective morphism, it follows that there exists some integers $0\leq j_1 \leq \frac{D}{a_1}-1, \ldots,0\leq j_1 \leq \frac{D}{a_1}-1$  such that $a_1j_1+\cdots+a_rj_r \equiv n (\bmod D)$. 

From Corollary $1.3$ it follows that $n<a_1j_1+\cdot+a_rj_r$, hence
$$n\leq a_1j_1+\cdots+a_rj_r - D \leq (D-1)r-a_1-\cdots-a_r. $$
\end{proof}

The following corollary is a particular case of \cite[Theorem 2.7]{racz} and appears 
also in \cite[Theorem 1(a)]{tripathi}.

\begin{cor}
Let $\mathbf a = (a_1,\ldots,a_r)$ such that $\gcd(a_i,a_j)=1$ for all $i\neq j$,
 $D=\lcm(a_1,\ldots,a_r)=a_1\cdots a_r$, $A_i:=\frac{D}{a_i}$, $1\leq i\leq r$. It holds that
$$ F(A_1,\ldots,A_r)=D(r-1)-A_1-\cdots-A_r.$$
\end{cor}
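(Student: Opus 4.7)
The plan is to derive this corollary as a sharpening of Proposition $5.1$ applied to the sequence $\mathbf A := (A_1,\ldots,A_r)$. First I would record the basic numerical identities that the hypothesis of pairwise coprimality yields: $\gcd(A_1,\ldots,A_r)=1$, $\lcm(A_1,\ldots,A_r)=D$ (because for $i\neq j$, $\lcm(A_i,A_j)=a_1\cdots a_r=D$, and iterating gives $D$), and $D/A_i=a_i$. These put us exactly in the setting of Proposition $5.1$ with the common multiple for $\mathbf A$ being $D$, and therefore give immediately
$$F(A_1,\ldots,A_r) \leq D(r-1) - A_1 - \cdots - A_r.$$

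For the matching lower bound, let $N:=D(r-1)-A_1-\cdots-A_r$. I plan to use Corollary $1.3$ applied to $\mathbf A$ to show $p_{\mathbf A}(N)=0$. The crucial input is a Chinese Remainder Theorem observation: since $A_i\equiv 0\pmod{a_k}$ for $k\neq i$ and $A_i\equiv \prod_{k\neq i}a_k\pmod{a_i}$ is a unit mod $a_i$, the map
$$(j_1,\ldots,j_r)\longmapsto A_1j_1+\cdots+A_rj_r\pmod D$$
is a bijection from $\prod_{i=1}^r\mathbb Z/a_i\mathbb Z$ onto $\mathbb Z/D\mathbb Z$. Thus in the sum over the congruence condition appearing in Corollary $1.3$ there is only \emph{one} tuple $(j_1,\ldots,j_r)$ with $0\leq j_i\leq a_i-1$ satisfying $A_1j_1+\cdots+A_rj_r\equiv N\pmod D$.

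I would then identify this unique tuple as $(a_1-1,\ldots,a_r-1)$: indeed
$$\sum_{i=1}^r A_i(a_i-1) = rD - \sum_{i=1}^r A_i \equiv -\sum_{i=1}^r A_i \equiv N \pmod D,$$
so by uniqueness this is the distinguished tuple. Its value is $rD-(A_1+\cdots+A_r)=N+D>N$, so the condition of Corollary $1.3$ is met and $p_{\mathbf A}(N)=0$, i.e.\ $F(A_1,\ldots,A_r)\geq N$. Combined with the upper bound from Proposition $5.1$, the equality follows.

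The main obstacle is really just the CRT/bijection step: making sure the coprimality hypothesis is used correctly to conclude uniqueness of the representative tuple modulo $D$. Everything else is a short substitution, and no heavy identity from Section $4$ is required.
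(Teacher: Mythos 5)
Your proof is correct, and while the upper bound is obtained exactly as in the paper (Proposition $5.1$ applied to $\mathbf A$, after checking $\lcm(A_1,\ldots,A_r)=D$), your lower bound takes a genuinely different route. The paper argues directly: it supposes $N:=D(r-1)-A_1-\cdots-A_r$ were representable as $A_1j_1+\cdots+A_rj_r$ with $j_k\geq 0$, rewrites this as $r-1=\sum_k\frac{j_k+1}{a_k}$, uses pairwise coprimality to force $a_k\mid(j_k+1)$, and gets the contradiction $r-1\geq r$; this is completely elementary and needs nothing beyond the definition of $p_{\mathbf A}$. You instead invoke Corollary $1.3$ and a Chinese Remainder Theorem argument showing that $(j_1,\ldots,j_r)\mapsto A_1j_1+\cdots+A_rj_r \bmod D$ is a bijection from $\prod_i\mathbb Z/a_i\mathbb Z$ onto $\mathbb Z/D\mathbb Z$, so that the congruence class of $N$ has the unique representative tuple $(a_1-1,\ldots,a_r-1)$ with value $N+D>N$. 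Your CRT step is sound (each $A_i$ vanishes mod $a_k$ for $k\neq i$ and is a unit mod $a_i$), and your identification of the distinguished tuple is correct. What your version buys is slightly more structural information: it exhibits the unique ``obstructing'' tuple and shows the defect is exactly $D$, which fits the machinery of Section $1$; what the paper's version buys is brevity and independence from Corollary $1.3$. Both arguments silently ignore the degenerate cases where $N<0$ (e.g.\ some $a_i=1$), which is a shared, harmless omission.
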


\begin{proof}
It holds that $D=\lcm(A_1,\ldots,A_r)$. From Lemma $5.1$ it follows that $$F(A_1,\ldots,A_r)\leq D(r-1)-A_1-\cdots-A_r.$$

Suppose that $D(r-1)-A_1-\cdots-A_r = A_1j_1+\cdots+A_rj_r$ with $j_k\geq 0$ for $1\leq k\leq r$, hence
$$D(r-1)=A_1(j_1+1)+\cdots+A_r(j_r+1),$$
$$ r-1=\frac{j_1+1}{a_1}+\cdots+\frac{j_r+1}{a_r}. $$
Since $\gcd(a_i,a_j)=1$ it follows that $a_k|(j_k+1)$ for all $1\leq k\leq r$.
Since $j_k\geq 0$ we get $$r-1 = \frac{j_1+1}{a_1}+\cdots+\frac{j_r+1}{a_r}\geq r,$$ a contradiction. So $F(A_1,\ldots,A_r)\geq D(r-1)-A_1-\cdots-A_r.$
\end{proof}

{}

\vspace{2mm} \noindent {\footnotesize
\begin{minipage}[b]{15cm}
Mircea Cimpoea\c s, Simion Stoilow Institute of Mathematics, Research unit 5, P.O.Box 1-764,\\
Bucharest 014700, Romania, E-mail: mircea.cimpoeas@imar.ro
\end{minipage}}

\vspace{2mm} \noindent {\footnotesize
\begin{minipage}[b]{15cm}
Florin Nicolae, Simion Stoilow Institute of Mathematics, P.O.Box 1-764,\\
Bucharest 014700, Romania, E-mail: florin.nicolae@imar.ro
\end{minipage}}

\end{document}